\newtheorem{theorem}{Theorem}[section]
\newtheorem{lemma}[theorem]{Lemma}
\newtheorem{prop}[theorem]{Proposition}
\newtheorem{conj}[theorem]{Conjecture} 
\newtheorem{cor}[theorem]{Corollary}
\newtheorem{que}[theorem]{Question}
\theoremstyle{definition}
\theoremstyle{remark}
\newtheorem{remark}[theorem]{Remark}
\numberwithin{equation}{section}
\newcommand{\RNum}[1]{\uppercase\expandafter{\romannumeral #1\relax}}
\newlength{\mywidth}
\begin{document}

\title[the Fraser-Li conjecture and the Weierstrass representation]{A new approach to the Fraser-Li conjecture with the Weierstrass representation formula}

\author[J. Lee]{Jaehoon Lee}
\address[]{Jaehoon Lee, Department of Mathematical Sciences, Seoul National University, Seoul  08826, Korea}
\email{jaehoon.lee@snu.ac.kr}

\author[E. Yeon]{Eungbeom Yeon}
\address[]{Eungbeom Yeon, Department of Mathematical Sciences, Seoul National University, Seoul  08826, Korea}
\email{y2bum@snu.ac.kr}

\subjclass[2020]{Primary 53A10, 53C42}

\keywords{Free boundary minimal surfaces, the Weierstrass representation formula, the Fraser-Li conjecture, the Liouville equation}
\date{}

\commby{Jiaping Wang}

\begin{abstract}
In this paper, we provide a sufficient condition for a curve on a surface in $\mathbb{R}^3$ to be given by an orthogonal intersection with a sphere. This result makes it possible to express the boundary condition entirely in terms of the Weierstrass data without integration when dealing with free boundary minimal surfaces in a ball $\mathbb{B}^3$. Moreover, we show that the Gauss map of an embedded free boundary minimal annulus is one to one. By using this, the Fraser-Li conjecture can be translated into the problem of determining the Gauss map. On the other hand, we show that the Liouville type boundary value problem in an annulus gives some new insight into the structure of immersed minimal annuli orthogonal to spheres. It also suggests a new PDE theoretic approach to the Fraser-Li conjecture.
\end{abstract}

\maketitle

\section{Introduction}\label{intro}
The theory of free boundary minimal surfaces has been a very active field of research. One of the most widely accepted conjectures is the following: 
	\begin{conj}[Fraser and Li, \cite{FraserLi}]\label{FL}
	The critical catenoid is the only embedded free boundary minimal annulus in $\mathbb{B}^3$, up to rigid motions.
	\end{conj}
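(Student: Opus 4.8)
\noindent\textbf{A plan for Conjecture~\ref{FL}.}
The plan is to run the whole argument through the Weierstrass representation, using the two structural facts announced above --- the integration-free reformulation of the free boundary condition and the injectivity of the Gauss map of an embedded free boundary minimal annulus --- to reduce Conjecture~\ref{FL} to a rigidity statement for a Liouville-type boundary value problem on an annulus, and then to prove that rigidity statement. First I would fix the conformal type: an embedded free boundary minimal annulus $M\subset\mathbb B^3$ is conformally a standard annulus $A_\rho=\{z\in\mathbb C:\rho<|z|<1\}$ for a unique $\rho\in(0,1)$, so by the Weierstrass representation $M$ is determined by a meromorphic Gauss map $g$ and a holomorphic one-form $\eta$ on $A_\rho$ (with the usual compatibility at the poles of $g$), subject only to the three real period conditions $\operatorname{Re}\oint_\gamma\big(\tfrac12(1-g^2),\tfrac i2(1+g^2),g\big)\eta=0$ over the core curve $\gamma$. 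By the sphere-intersection criterion the free boundary condition $|X|\equiv 1$, $\langle X,N\rangle\equiv 0$ on $\partial M$ turns into a \emph{pointwise} system of equations relating $g$, $g'$ and $\eta$ on the two circles $\{|z|=\rho\}$ and $\{|z|=1\}$, with no integrals involved.

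The second step is to transport everything to the image of the Gauss map. Since $M$ is embedded, $g$ is injective, hence a conformal equivalence of $A_\rho$ onto a doubly connected domain $\Omega=\mathbb S^2\setminus(D_0\cup D_\infty)$ with $D_0,D_\infty$ disjoint closed topological disks, and $M$ is then presented by the Weierstrass data $(\mathrm{id},\tilde\eta\,dw)$ on $\Omega$, where $\tilde\eta\,dw$ is the push-forward of $\eta$. The induced metric is $\tilde\lambda^2|dw|^2$ with $\tilde\lambda=\tfrac12|\tilde\eta|(1+|w|^2)$, the Gauss curvature is $K=-16\,|\tilde\eta|^{-2}(1+|w|^2)^{-4}<0$, and the Gauss equation becomes the Liouville-type identity $\Delta_w\log\tilde\lambda=\big(\tfrac{2}{1+|w|^2}\big)^2$. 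So the remaining problem is to classify all pairs $(\Omega,\tilde\eta)$ for which the transported period conditions and the transported integration-free free boundary conditions on $\partial\Omega$ hold simultaneously; after a M\"obius normalization the critical catenoid is the case $\Omega=A_{\rho_0}$, $\tilde\eta=c\,w^{-2}$, for the unique $\rho_0$ making that system solvable, and Conjecture~\ref{FL} is exactly the statement that no other pair occurs.

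The third step --- the analytic core --- is to prove that $(\Omega,\tilde\eta)$ is forced to be the catenoidal pair. I would read the free boundary condition on $\partial\Omega$ as an overdetermined (Robin/Neumann-type) boundary condition for the Liouville equation of the conformal factor on the doubly connected domain $\Omega$, couple it with the two period identities, and run a rigidity argument --- either an Alexandrov/moving-plane reflection in the spirit of Serrin's overdetermined problem, now with the conformal class of $\Omega$ itself unknown, or an integral identity of Pohozaev--Reilly type. For the latter one has the concrete handle that $f=1-|X|^2\ge 0$ on $M$ satisfies $\Delta_M f=-4$ with $f=0$ and $\partial_\nu f=-2$ on $\partial M$ (the normal-derivative value using that the free boundary condition makes the outward conormal equal the position vector), and one would combine this with the explicit hyperbolic (Liouville) metric on a round annulus to squeeze out that $\Omega$ must be M\"obius-equivalent to a concentric round annulus and $\tilde\eta$ the catenoid form, which simultaneously pins $\rho$ to $\rho_0$.

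The step I expect to be the main obstacle is precisely this last rigidity statement for the Liouville boundary value problem on the annulus. In Nitsche's disk case a single holomorphic Hopf differential on a simply connected domain settles matters; here the conformal type of $\Omega$ is a genuine free modulus entangled with $\tilde\eta$ and with the period conditions, and the overdetermined condition is of Serrin type on a two-dimensional variable domain rather than a fixed Euclidean one, so the usual reflection and $P$-function techniques do not transfer off the shelf. Showing that no non-catenoidal $(\Omega,\tilde\eta)$ survives all of these constraints at once is the crux, and --- in the language of the present paper --- it is exactly the content of the new PDE-theoretic approach that the reduction in Steps 1--2 is meant to open up; I would therefore treat Steps 1--2 as the part that can be carried out cleanly and Step 3 as the hard problem this reformulation is designed to make tractable.
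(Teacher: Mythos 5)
The statement you are addressing is Conjecture~\ref{FL}, which the paper presents as an \emph{open problem}: it offers no proof, only partial results intended to make the conjecture more tractable. Your submission is likewise not a proof but a program, and you say so yourself: Steps 1--2 are a reduction, and Step 3 --- rigidity for the overdetermined Liouville-type boundary value problem on a doubly connected domain whose conformal modulus is itself an unknown --- is left entirely open. That missing step is not a technical gap to be filled in later; it is the entire mathematical content of the conjecture. A reduction of an open problem to another open problem, however natural, cannot be accepted as a proof, so the verdict here is that the proposal has a genuine and essential gap, namely all of Step 3.

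That said, your reduction tracks the paper's own program quite closely, with a few points worth flagging. The integration-free reformulation of the free boundary condition is Proposition~\ref{Thm1}; the injectivity of the Gauss map is Theorem~\ref{prop2}, but note that it is \emph{not} a formal consequence of embeddedness --- the paper's proof requires a nontrivial sliding argument with great circles on $\partial\mathbb{B}^3$, exploiting that the boundary curve is locally convex because the Hopf differential $\tfrac{C_0}{z^2}dz^2$ has no zeros, and it only uses embeddedness of the boundary curves; you assert injectivity without supplying any of this. The Liouville reduction appears in Section~\ref{Five}, though the paper derives the problem E[$R$, $\epsilon$, $C_0$] from the Simons identity in the domain coordinate $z$ rather than from the Gauss equation in the Gauss-map coordinate $w$ (these are essentially equivalent), and then proves a converse (Theorem~\ref{Liou}) showing that Liouville solutions produce minimal immersions orthogonal to two \emph{possibly distinct} unit spheres, defined a priori only on the universal cover; the issues of whether $O_1=O_2$, whether the periods close up, and whether the resulting annulus is rotationally symmetric are exactly where the paper locates the remaining difficulty, consistent with your own assessment. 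Your observation that $f=1-|X|^2$ satisfies $\Delta_M f=-4$ with $f=0$ and $\partial_\nu f=-2$ on $\partial M$ is correct and is a sensible candidate for a $P$-function or Pohozaev--Reilly identity, but no such identity is produced, and none is known that closes the argument.
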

The above open question deals with the free boundary analog of the Lawson conjecture in which the Clifford torus is the only embedded minimal torus in $\mathbb{S}^3$. Lawson's conjecture was proved in \cite{Brendle} by applying a maximum principle to a two-point function obtained from the geometric observation of the inner and outer spheres of a surface in $\mathbb{S}^3$. It is tempting to check whether a similar method holds, but two boundary components of the surface make it difficult to apply a maximum principle type method to the Fraser-Li conjecture.

Instead, another useful tool called the \emph{Weierstrass representation formula} is used in the classical minimal surface theory:
\begin{align*}
\mbox{Re}\int\left[\frac{1}{2}(1-g^2)\omega, \frac{i}{2}(1+g^2)\omega, g\omega\right],
\end{align*}
where $g$ is a meromorphic function and $\omega$ is a holomorphic one-form on a Riemann surface. Since the Weierstrass representation formula is presented in the integral form, it is difficult to translate all the information related to free boundary into the data $g$ and $\omega$.

Meanwhile, two facts on boundary curves can be obtained from the free boundary condition. More generally, let $\Sigma$ be a surface in $\mathbb{R}^3$ that meets a sphere orthogonally along a curve $\Gamma$. As implied by the Terquem-Joachimsthal theorem \cite{Spi}, $\Gamma$ is a curvature line on $\Sigma$. Moreover, as the conormal vector of $\Sigma$ along the curve coincides with the unit normal vector to the sphere, the geodesic curvature along $\Gamma$ computed on $\Sigma$ is identical with the normal curvature on the sphere equal to 1. It turns out that the converse is also true. Indeed, we gained the below observation as the two conditions are so powerful:
\begin{prop}[Proposition \ref{Thm1} in Section \ref{Main1}]
Let $\Gamma$ be a compact connected real analytic curve on a surface $\Sigma$ in $\mathbb{R}^3$. Assume further that $\Gamma$ is a line of curvature along which the principal curvature vanishes only at finitely many points. If $\Gamma$ has constant geodesic curvature $c$ on $\Sigma$, then there exists a sphere $S$ of radius $\frac{1}{|c|}$ (if $c=0$, it means a plane) where $\Sigma$ intersects $S$ orthogonally along $\Gamma$. When $\Gamma$ is a piecewise real analytic curve, the same result can be obtained once a sphere is replaced by a union of spheres.
\end{prop}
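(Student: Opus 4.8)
The plan is to route everything through the Darboux frame of $\Gamma$ and to write down the centre of the sphere by an explicit pointwise formula, so that no integration of the frame equations is actually needed. Let $\gamma$ be a unit-speed parametrization of $\Gamma$ (for the moment on a single smooth analytic arc), choose a smooth unit normal $N$ of $\Sigma$ along $\Gamma$ — possible since $\Sigma$ is orientable along the curve — and set $T=\gamma'$ and $\nu=N\times T$. Then
\[
T'=\kappa_g\nu+\kappa_n N,\qquad \nu'=-\kappa_g T+\tau_g N,\qquad N'=-\kappa_n T-\tau_g\nu .
\]
The hypothesis that $\Gamma$ is a line of curvature is, by Rodrigues' theorem, exactly $\tau_g\equiv 0$; constant geodesic curvature is $\kappa_g\equiv c$; and, since the tangent of a line of curvature is a principal direction, $\kappa_n$ is the principal curvature of $\Sigma$ along $\Gamma$, so the last hypothesis says $\kappa_n$ vanishes only finitely often.

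Suppose first $c\neq 0$ and consider the candidate centre $p(s):=\gamma(s)+\tfrac1c\,\nu(s)$. Using $\nu'=-\kappa_g T+\tau_g N=-c\,T$ we get $p'=T+\tfrac1c(-cT)=0$, so $p$ is constant along the arc. Hence $|\gamma(s)-p|=\tfrac1{|c|}$, i.e. $\Gamma$ lies on the sphere $S$ of radius $\tfrac1{|c|}$ centred at $p$. Moreover the outward unit normal of $S$ at $\gamma(s)$ is $(\gamma(s)-p)/|\gamma(s)-p|=-\operatorname{sgn}(c)\,\nu(s)$, which is orthogonal to $N(s)$; thus the unit normals of $S$ and of $\Sigma$ are perpendicular all along $\Gamma$, which is exactly orthogonal intersection. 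If $c=0$, then $\nu'=0$, so $\nu\equiv\nu_0$ is constant, $\langle\gamma,\nu_0\rangle'=\langle T,\nu_0\rangle=0$, and $\Gamma$ lies in an affine plane with normal $\nu_0$; since $\nu_0\perp N$, the surface $\Sigma$ meets this plane orthogonally along $\Gamma$. Note that $\kappa_n$ never entered the computation beyond being finite-valued, so I expect the hypothesis controlling its zeros to be relevant only at the gluing step below.

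For the piecewise real analytic case I would decompose $\Gamma$ into its finitely many maximal analytic arcs and apply the above to each; the frame $(T,\nu,N)$, hence the centre $p$, can jump at a corner, so the conclusion becomes a finite union of spheres (and planes), one attached to each arc, exactly as stated. The delicate point — and where real analyticity and the finiteness of the zero set of the principal curvature earn their keep — is checking that the Darboux frame really is $C^1$ along a whole analytic arc, so that $p$ is constant on the arc and not merely locally constant on the complement of the exceptional set: at umbilic points and at points where the principal direction field along $\Gamma$ threatens to degenerate one must verify that this field extends continuously (analyticity of $\Gamma$ together with the exceptional set being finite reduces this to a routine real-analytic continuation), and one must fix the sign of $N$ consistently around the closed curve so that "$\kappa_g\equiv c$" is unambiguous. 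Once that bookkeeping is done, the explicit formula $p=\gamma+\tfrac1c\nu$ (resp.\ the constancy of $\nu$) does the rest, and I would not expect any further genuine obstacle.
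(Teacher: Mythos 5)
Your proof is correct, and it takes a genuinely different --- and in fact shorter --- route than the paper's. The paper works in the Frenet frame: it introduces the angle $\theta$ between the Frenet basis $\{b,n\}$ of the normal plane and the Darboux basis $\{N,\nu\}$, derives $\tau=-D_t\theta$, and then verifies the classical criterion $\left(\tfrac{1}{\kappa}\right)^2+\left(\left(\tfrac{1}{\kappa}\right)'\right)^2\left(\tfrac{1}{\tau}\right)^2=R^2$ for a space curve to lie on a sphere, together with the accompanying formula for the sphere's normal. Because that criterion degenerates when $\tau$ or $\kappa'$ vanishes, the paper needs two separate lemmas (non-vanishing torsion versus identically vanishing torsion, the latter reducing to a planar circle), uses the hypothesis that the principal curvature is nonzero precisely to force $\kappa\neq0$ and $\sin\theta\neq0$, and must finally glue the spheres obtained on the resulting finitely many subarcs by a continuity argument. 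Your Darboux-frame computation --- $\tau_g\equiv 0$ for a line of curvature, hence $\nu'=-cT$ and $p=\gamma+\tfrac1c\nu$ constant --- bypasses all of this: it produces the centre by an explicit pointwise formula valid at every point of the curve, needs no case split on the torsion, handles $c=0$ uniformly, and, as you observe, never actually uses the finiteness of the zero set of the principal curvature (that hypothesis is an artefact of the Frenet-frame route, where it guarantees the non-degeneracy the spherical-curve criterion requires). Gluing is also trivial for you, since $p$ is continuous and locally constant along a connected analytic arc, hence constant there. One remark: your closing worry about the regularity of the Darboux frame and a possible degeneration of the principal direction field is unfounded --- the frame is built from $T=\gamma'$ and the restriction of the surface normal $N$ to $\Gamma$, so it is automatically as regular as $\gamma$ and $\Sigma$, and no choice of principal direction is ever made; the only genuine bookkeeping is the consistent choice of sign of $N$ around the closed curve, which you already note.
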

This proposition generalizes the well-known fact that if a surface contains a principal geodesic, then it meets the plane containing the geodesic perpendicularly. 

If $\Sigma$ is a minimal surface, then the condition on the principal curvature can be equivalently stated that $\Gamma$ contains only a finite number of umbilic points. Since curvature lines on a nonplanar minimal surface are real analytic and contain possibly a finite number of umbilic points, the proposition is applied to both a curve on the interior of a minimal surface and real analytic boundaries. It should be noted that the conditions in the proposition can be expressed in terms of the Weierstrass data $(g, \omega)$ without undergoing a process of integration. In this way, the proposition solves difficulties associated with using the representation formula for free boundary minimal surfaces in a ball.

In Section \ref{gauss} we prove that the Gauss map of a free boundary minimal annulus in $\mathbb{B}^3$ with embedded boundary is a one to one map. This enables us to globally parametrize the surface via the Gauss map. Analyzing the Hopf differential, the Fraser-Li conjecture eventually becomes the problem of determining holomorphic functions on concentric annuli with boundary conditions from the geodesic curvature. As a conformal diffeomorphism from a doubly-connected region to a concentric annulus is uniquely determined up to scaling and rotations, the conjecture becomes the problem on the shape of the Gauss image. However, the boundary conditions depend not only on the geometric term (curvature of the boundaries of the Gauss image) but also on the parametrization itself. This fact is discussed in Remark \ref{BC}. It remains the main difficulty with this approach.

On the other hand, we observe that the Liouville equation gives some new insight into the conjecture. More specifically, the Liouville type boundary value problem in an annulus (E[$R$, $\epsilon$, $C_0$] in Section \ref{Five}) can be obtained from the Simons identity on a free boundary minimal annulus. Conversely, we construct a minimal immersion from the Liouville solution by using the above proposition and the Weierstrass representation formula in Theorem \ref{Liou}. This minimal immersion is defined on the universal cover of the annulus and orthogonal to unit spheres. As a corollary, it follows that an immersed free boundary minimal annulus in a ball can be divided into congruent pieces, which we call a fundamental piece. Studying the geometry of fundamental pieces will be an interesting future direction. See Section \ref{Five} for more details.

It should also be mentioned that Jim\'{e}nez \cite{Jimenez} solved the Liouville equation in an annulus to classify constant curvature annuli. Although it was successful to deal with the boundary in \cite{Jimenez} by considering the Schwarzian derivative of a meromorphic function (for instance, $h$ in (\ref{general})), the free boundary condition does not work well with this method. Hence only using the holomorphic function theory is a difficult approach. In this regard, considering the Liouville equation suggests a new PDE theoretic approach to the Fraser-Li conjecture. 

The paper is organized as follows. In Section \ref{prelim} the Weierstrass representation and the well-known fact on a characterization of spherical space curves are reviewed. In Section \ref{Main1} we prove that a necessary condition obtained from the orthogonal intersection with a sphere also become a sufficient condition for the existence of a sphere that meets the surface orthogonally. Then we show in Section \ref{gauss} that the Gauss map is one to one on an embedded free boundary minimal annulus in a ball. In the final section, the relation between the Fraser-Li conjecture and the Liouville equation is addressed.

\section{Preliminaries}\label{prelim}
\subsection{Weierstrass representation}
We recall the Weierstrass representation of a minimal surface in $\mathbb{R}^3$. Since coordinate functions are harmonic, it can be expressed as a conformal harmonic immersion from a Riemann surface $C$ into $\mathbb{R}^3$: 
	\begin{align*}
	\mbox{Re}\int\left[\frac{1}{2}(1-g^2)\omega, \frac{i}{2}(1+g^2)\omega, g\omega\right],
	\end{align*}
where $g$ is a meromorphic function and $\omega$ is a holomorphic one-form on $C$ such that $g$ has order $n$ pole at $p\in C$ if and only if $\omega$ has order $2n$ zero at $p\in C$. Note that $g$ corresponds to the Gauss map and $(g, \omega)$ is called a \emph{Weierstrass data}.

The induced metric is 
	\begin{align*}
	\textup{d} s^2=\frac{1}{4}(1+|g|^2)^2|\omega|^2
	\end{align*}
and the second fundamental form is given by
	\begin{align*}
	\mbox{Re}\{\textup{d} g\cdot\omega\}.
	\end{align*} 
	To obtain a well-defined immersion from $C$, it should satisfy the period condition:
	\begin{align*}
	\mbox{Re}\int_{\delta}\left[\frac{1}{2}(1-g^2)\omega, \frac{i}{2}(1+g^2)\omega, g\omega\right]=0
	\end{align*}
for every closed curves $\delta$ on $C$. Otherwise, the Weierstrass data only gives a well-defined minimal immersion on the universal cover of $C$.

\subsection{Characterization of spherical space curves}
We may recall an elementary fact on spherical curves, which is one of the main ingredients of the paper. Since a curve in $\mathbb{R}^3$ is determined (up to a rigid motion) by the curvature and torsion, it is possible to characterize spherical space curves in terms of them as follows.

Let $\alpha=\alpha(t): I\to \mathbb{R}^3$ be an arclength parametrized curve such that $\tau(t)\neq 0$ and $\kappa'(t)\neq 0$ for all $t\in I$. Here, $\kappa$ is the curvature and $\tau$ is the torsion. Then $\alpha(I)\subset \mathbb{S}^2(R)$ if and only if
\begin{align}\label{DoC}
	\left(\frac{1}{\kappa}\right)^2+\left(\left(\frac{1}{\kappa}\right)'\right)^2\left(\frac{1}{\tau}\right)^2=R^2.
\end{align}
Moreover, if $\alpha(I)\subset \mathbb{S}^2(R)$, then the unit normal vector of the sphere can be expressed as
	\begin{align}\label{DoCC}
	-\frac{1}{R\kappa}n+\frac{\kappa'}{R\kappa^2\tau}b,
	\end{align}
where $n$ and $b$ are the normal and binormal vector of $\alpha$, respectively. 

\section{Sufficient condition to meet a sphere orthogonally}\label{Main1}
In this section, we discuss a sufficient condition for a surface to meet a sphere orthogonally. We first prove two lemmas that describe the local nature of orthogonal intersections. Then by combining two lemmas and some global arguments, we prove the main result. 
\begin{lemma}\label{Lem1}
Let $\Gamma$ be a line of curvature on a surface $\Sigma$ with constant geodesic curvature $c\neq0$. Suppose that the principal curvature along $\Gamma$ does not vanish. Moreover, assume that $\Gamma$ has a non-vanishing torsion as a curve in $\mathbb{R}^3$. Then there exists a sphere of radius $\frac{1}{|c|}$ such that it intersects $\Sigma$ orthogonally along $\Gamma$.
\end{lemma}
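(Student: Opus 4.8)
The plan is to construct the candidate sphere explicitly using the characterization of spherical curves recalled in Section \ref{prelim}, and then to verify that $\Sigma$ meets it orthogonally. First I would set up Frenet frame data for $\Gamma$: write $\kappa, \tau$ for its curvature and torsion in $\mathbb{R}^3$, and let $N$ be the unit normal to $\Sigma$ along $\Gamma$. Since $\Gamma$ is a line of curvature, the Darboux frame $\{T, N\times T, N\}$ evolves by the geodesic curvature $\kappa_g = c$, the normal curvature $\kappa_n = \lambda$ (the principal curvature along $\Gamma$, which by hypothesis is nowhere zero), and the geodesic torsion $\tau_g$, which vanishes \emph{precisely because} $\Gamma$ is a curvature line (Terquem--Joachimsthal). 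Relating the Darboux frame to the Frenet frame gives $\kappa^2 = c^2 + \lambda^2$ and, crucially, $\tau = -\lambda'/(\kappa^2 \cdot \text{something})$ — more precisely the standard identities yield $\kappa' $ and $\tau$ in terms of $c, \lambda, \lambda'$. From $\kappa^2 = c^2+\lambda^2$ one gets $\kappa \kappa' = \lambda \lambda'$, and the geodesic-torsion-zero condition forces a relation that lets me compute $(1/\kappa)'$ and $\tau$ explicitly.

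The key computation is then to check that the spherical condition \eqref{DoC} holds with $R = 1/|c|$. Plugging $\kappa^2 = c^2 + \lambda^2$ into $(1/\kappa)^2 + ((1/\kappa)')^2 (1/\tau)^2$ and using the Darboux-to-Frenet relations, I expect everything to collapse to $1/c^2$; this is the heart of the lemma and is exactly where the hypotheses $c\neq 0$, $\lambda \neq 0$, and $\tau\neq 0$ are all used (the last two guarantee that \eqref{DoC} and the normal-vector formula \eqref{DoCC} are even applicable, since they require $\kappa' \neq 0$ and $\tau\neq 0$, and $\kappa'\neq 0$ will follow from $\lambda\lambda'$ not being identically... actually one must be a little careful here — see below). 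Granting \eqref{DoC}, the curve $\Gamma$ lies on a sphere $S$ of radius $1/|c|$; its center is $\alpha + \frac{1}{R\kappa} n - \frac{\kappa'}{R\kappa^2\tau} b$ in Frenet notation, and I claim this point is constant along $\Gamma$ (differentiate and use \eqref{DoC} again). Finally, by \eqref{DoCC} the unit normal to $S$ along $\Gamma$ is $-\frac{1}{R\kappa} n + \frac{\kappa'}{R\kappa^2 \tau} b$; I would rewrite this in the Darboux frame and identify it with $\pm N$, using that both $N$ and the sphere normal are unit vectors orthogonal to $T$ lying in the normal plane of $\Gamma$, and that the $N$-component works out correctly from the sign of the normal curvature. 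Orthogonality of $\Sigma$ and $S$ along $\Gamma$ is precisely the statement that their normals agree there, so this finishes the proof.

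The main obstacle I anticipate is the bookkeeping around $\kappa'$ and the applicability of \eqref{DoC}. The cited characterization requires $\tau \neq 0$ (given) \emph{and} $\kappa' \neq 0$; but $\kappa \kappa' = \lambda \lambda'$, so $\kappa'$ vanishes wherever $\lambda' $ does, and there is no hypothesis preventing that. So strictly I would first establish the orthogonal-sphere conclusion on the open dense set where $\kappa' \neq 0$ (or run the Frenet computation directly without quoting \eqref{DoC}, deriving the constancy of the center from the Darboux equations, which sidesteps the $\kappa'\neq 0$ requirement entirely), and then extend to all of $\Gamma$ by continuity / real-analyticity of $\Gamma$ and of the center map. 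A clean way to avoid the issue altogether is to not invoke \eqref{DoC} as a black box but to directly verify that the point $p := \alpha + \frac{1}{\lambda}N$ — the center of the osculating sphere predicted by the Darboux frame — has $p' = 0$ along $\Gamma$: differentiating and substituting $T' = c\,(N\times T) + \lambda N$, $N' = -\lambda T$ (curvature-line, so no $N\times T$ term), together with $\lambda' $ handled via $c = $ const, should give $p' = 0$ outright. Then $|\alpha - p| = |1/\lambda|$, and one still needs $|1/\lambda| = 1/|c|$, which must come from an additional relation — so in fact the correct center is $p = \alpha + \frac{1}{R^2}\big(\text{the vector in }\eqref{DoCC}\big)\cdot R = \alpha + \frac1{c}\cdot(\text{unit sphere normal})$, and I would let the computation dictate the precise combination. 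That reconciliation of "$1/\lambda$" versus "$1/|c|$" is the subtle point, and I expect it to be forced by combining $\kappa^2 = c^2 + \lambda^2$ with the vanishing of geodesic torsion.
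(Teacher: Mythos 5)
Your overall strategy (introduce the angle between the Frenet and Darboux frames, use the line-of-curvature condition and the constancy of $\kappa_g=c$ to verify the spherical characterization \eqref{DoC} with $R=1/|c|$) is the paper's approach, and that half of the plan is sound. Incidentally, your worry about $\kappa'$ vanishing resolves itself: writing $b=\cos\theta\,N+\sin\theta\,\nu$, the curvature-line condition gives $\tau=-D_t\theta$, and $c=\kappa\cos\theta=\mathrm{const}$ then forces $D_t\kappa/(\tau\kappa)=-\tan\theta$, which is nonzero because the principal curvature $\kappa\sin\theta$ is nonzero; so $\kappa'\neq0$ everywhere under the stated hypotheses and \eqref{DoC}, \eqref{DoCC} apply on all of $\Gamma$, with no need for a dense-set-plus-continuity patch.

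The genuine error is in your final step. Orthogonal intersection of $S$ and $\Sigma$ along $\Gamma$ means their normals are \emph{perpendicular} there (equivalently, the sphere's radial direction coincides with the conormal $\pm\nu$ of $\Sigma$), not that the normals agree --- normals agreeing is tangency. Accordingly, your plan to ``identify the vector \eqref{DoCC} with $\pm N$'' cannot succeed and would not close the proof even if the identification held; the correct verification, and the one the computation actually delivers, is $\bigl\langle -\tfrac{1}{R\kappa}n+\tfrac{\kappa'}{R\kappa^2\tau}b,\,N\bigr\rangle=\tfrac{1}{\kappa}\sin\theta-\tfrac{1}{\kappa}\tan\theta\cos\theta=0$. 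Relatedly, your proposed ``clean'' center $p=\alpha+\tfrac{1}{\lambda}N$ is not constant: with $\tau_g=0$ one gets $p'=(1/\lambda)'N$, which vanishes only if the normal curvature is constant. The center that actually works is $\alpha+\tfrac{1}{c}\nu$ (since $\nu'=-cT$ by the Darboux equations with $\tau_g=0$), whose radial direction at $\Gamma$ is $\pm\nu$, tangent to $\Sigma$ --- which is exactly the orthogonality statement you need. You sensed the tension between $1/\lambda$ and $1/|c|$ but left it unresolved, and resolving it requires replacing $N$ by $\nu$ in the candidate center, i.e.\ correcting the same normal-versus-conormal confusion.
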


\begin{proof}
Let $t$ be the unit tangent vector of $\Gamma$. Let us denote the unit conormal along $\Gamma$ as $\nu$ and the unit normal to $\Sigma$ as $N$ such that $\{N, t, \nu\}$ is positively oriented in $\mathbb{R}^3$. Also we write the unit normal and binormal of $\Gamma$ by $n$ and $b$, respectively. Here $b$ is given by $t\wedge n$.

As $\{N, \nu\}$ and $\{b, n\}$ form oriented orthonormal bases for the normal plane of the curve, we may write 
	\begin{equation}\label{star}
		\begin{cases}
		&b=\cos\theta N+\sin\theta \nu\\
		&n=-\sin\theta N+\cos\theta \nu
		\end{cases}
	\end{equation}
for some function $\theta$ defined on $\Gamma$.

Let $\kappa$ and $\tau$ be the curvature and torsion of $\Gamma$ as a space curve. Since $\Gamma$ is a line of curvature on $\Sigma$, 
	\begin{align}
	\langle\nabla_t N, \nu\rangle=-\langle N, \nabla_t \nu\rangle=0,
	\end{align}
and we obtain from (\ref{star}) that
	\begin{align*}
	&\langle\nabla_t N, n\rangle=\cos\theta\langle\nabla_t N, \nu\rangle=0,\\ 
	&\langle\nabla_t \nu, n\rangle=-\sin\theta\langle\nabla_t\nu, N\rangle=0.
	\end{align*}
Here $\nabla$ means the Riemannian connection in $\mathbb{R}^3$. Then it follows from the Frenet-Serret formulas and (\ref{star}) that 
	\begin{align*}
	-\tau&=\langle\nabla_tb, n\rangle\\
		&=\langle\nabla_t(\cos\theta N+\sin\theta \nu), n\rangle\\
		&=(D_t \theta)\langle-\sin\theta N+\cos\theta \nu, n\rangle+\cos\theta\langle\nabla_tN, n\rangle+\sin\theta\langle\nabla_t\nu, n\rangle\\
		&=D_t \theta,
	\end{align*}
where $D_t$ denotes the directional derivative. Hence $\tau=-D_t \theta$.

Again by (\ref{star}) and the Frenet-Serret formulas, the geodesic curvature of $\Gamma$ computed on $\Sigma$ is given by
	\begin{align}\label{const}
	c^2=\langle\nabla_tt, \nu\rangle^2=\langle\kappa n, \nu\rangle^2=\kappa^2\cos^2\theta.
	\end{align}
Since the principal curvature along $\Gamma$ is nonzero and $c\neq0$, we have
	\begin{align*}
	\kappa\neq 0,\ \cos\theta\neq 0,\ \sin\theta\neq 0.
	\end{align*}
The constancy of the geodesic curvature along $\Gamma$ implies that
	\begin{align*}
	0=D_t(\kappa\cos\theta)=(D_t\kappa)\cos\theta -\kappa\sin\theta(D_t\theta)=(D_t\kappa)\cos\theta+\tau\kappa\sin\theta,
	\end{align*}
where we used $\tau=-D_t\theta$ in the last step. Therefore we obtain 
	\begin{align}
	\frac{D_t\kappa}{\tau\kappa}=-\tan\theta\neq 0.
	\end{align}
	
Now we compute
	\begin{align*}
	\frac{1}{\kappa^2}+\left(D_t\left(\frac{1}{\kappa}\right)\right)^2\frac{1}{\tau^2}&=\frac{1}{\kappa^2}\left(1+\frac{(D_t\kappa)^2}{\tau^2\kappa^2}\right)\\
	&=\frac{1}{\kappa^2}(1+\tan^2\theta)\\
	&=\frac{1}{c^2}
	\end{align*}
so that the characterization result in Section \ref{prelim} shows that $\Gamma$ lies on a sphere of radius $\frac{1}{|c|}$. Moreover, we deduce from
	\begin{align*}
	\left\langle-\frac{1}{\kappa}n+\frac{D_t\kappa}{\kappa^2\tau}b, N\right\rangle&=-\frac{1}{\kappa}\langle n, N\rangle+\frac{1}{\kappa}\cdot\frac{D_t\kappa}{\tau\kappa}\langle b, N\rangle\\
	&=\frac{1}{\kappa}\sin\theta-\frac{1}{\kappa}\tan\theta\cdot\cos\theta\\
	&=0
	\end{align*}
that the sphere intersects $\Sigma$ orthogonally.
\end{proof}

Next we have the following lemma for a vanishing torsion case:
\begin{lemma}\label{Lem2}
Let $\Gamma$ be a line of curvature on a surface $\Sigma$ with constant geodesic curvature $c\neq0$. If the torsion of $\Gamma$ is identically zero, then $\Gamma$ is a part of a circle. Therefore $\Sigma$ is orthogonal to a sphere of radius $\frac{1}{|c|}$ along $\Gamma$.
\end{lemma}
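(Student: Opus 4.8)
The plan is to use the torsion-free hypothesis twice: once to determine the shape of $\Gamma$ as a space curve, and once to locate the center of the desired sphere by an explicit parallel translate of $\Gamma$ inside $\Sigma$.

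First I would keep the notation of the proof of Lemma \ref{Lem1}: the Frenet frame $\{t,n,b\}$ of $\Gamma$, the adapted frame $\{N,t,\nu\}$ of $\Sigma$ along $\Gamma$, and the angle $\theta$ from (\ref{star}). The Frenet frame is legitimate here: since the geodesic curvature is the tangential component of the full curvature vector $\nabla_tt$, one has $|c|\le\kappa$, and $c\neq0$ forces $\kappa>0$ along $\Gamma$, so in particular the torsion is well defined. The identity $\tau=-D_t\theta$ established in the proof of Lemma \ref{Lem1} used only that $\Gamma$ is a line of curvature (not the nonvanishing of $\tau$ or of the principal curvature), so it remains valid; hence $\tau\equiv0$ yields that $\theta$ is constant on $\Gamma$. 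Combining this with $c=\kappa\cos\theta$ (cf. (\ref{const})) and $c\neq0$ forces $\cos\theta\neq0$, whence $\kappa=|c|/|\cos\theta|$ is a nonzero constant. Since a curve with identically vanishing torsion lies in an affine plane, and a plane curve of constant curvature is a circular arc, $\Gamma$ is a part of a circle of radius $1/\kappa$, which is the first assertion.

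For the sphere, I would write $\nu=\sin\theta\,b+\cos\theta\,n$ from (\ref{star}) and differentiate along $\Gamma$, using that $\theta$ is constant, the Frenet formulas, and $\tau\equiv0$, to obtain $\nabla_t\nu=\cos\theta(-\kappa t)=-c\,t$. Then
\[
\frac{d}{dt}\Bigl(\Gamma+\tfrac1c\nu\Bigr)=t-\tfrac1c\,c\,t=0,
\]
so $p_0:=\Gamma+\tfrac1c\nu$ is a constant point on the connected curve $\Gamma$. Consequently $|\Gamma-p_0|\equiv\tfrac1{|c|}$, i.e. $\Gamma$ lies on the sphere $S$ of radius $\tfrac1{|c|}$ centered at $p_0$, and the outward unit radial vector of $S$ along $\Gamma$ equals $-\operatorname{sgn}(c)\,\nu$, which is tangent to $\Sigma$; equivalently $N$, being orthogonal to $\nu$, is tangent to $S$. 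Hence $S$ meets $\Sigma$ orthogonally along $\Gamma$.

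I do not expect a genuine obstacle here: this is a short rigidity computation rather than an analytic argument. The only care needed is bookkeeping — verifying that ``torsion'' is meaningful on all of $\Gamma$ (this is exactly where $c\neq0$ enters), checking that the portion of the Lemma \ref{Lem1} computation I reuse does not secretly invoke the extra hypotheses of that lemma, and claiming only that $\Gamma$ is a circular arc while still noting that $S$ is uniquely pinned down by that arc. The potential degenerate case $\cos\theta=0$ is excluded automatically, since it would force $c=0$.
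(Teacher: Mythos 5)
Your proof is correct and takes essentially the same route as the paper: both reuse the identity $\tau=-D_t\theta$ from the proof of Lemma \ref{Lem1} (which indeed only needs the line-of-curvature condition and $\kappa\geq|c|>0$) to conclude that $\theta$ is constant, hence by (\ref{const}) $\kappa$ is constant and the torsion-free curve $\Gamma$ is a planar circular arc. The only difference is that the paper leaves the final step implicit (``the result follows''), while you pin down the sphere explicitly by checking that $\Gamma+\tfrac{1}{c}\nu$ is constant and that the radial direction $\pm\nu$ is tangent to $\Sigma$ --- a useful clarification, but not a different method.
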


\begin{proof}
We may use the same notation as in the proof of Lemma \ref{Lem1}. Since the torsion is identically zero, $\Gamma$ lies on a plane orthogonal to $b$. Moreover, $\tau=-D_t\theta=0$ implies that $\Sigma$ has constant contact angle $\theta$ with the plane along $\Gamma$. Now (\ref{const}) shows that $\Gamma$ has constant curvature on the plane and the result follows.
\end{proof}

Combining Lemma \ref{Lem1} and \ref{Lem2}, we obtain the proposition:
\begin{prop}\label{Thm1}
Let $\Gamma$ be a compact connected real analytic curve on a surface $\Sigma$ in $\mathbb{R}^3$. Assume further that $\Gamma$ is a line of curvature along which the principal curvature vanishes only at finitely many points. If $\Gamma$ has constant geodesic curvature $c$ on $\Sigma$, then there exists a sphere $S$ of radius $\frac{1}{|c|}$ (if $c=0$, it means a plane) where $\Sigma$ intersects $S$ orthogonally along $\Gamma$. When $\Gamma$ is a piecewise real analytic curve, the same result can be obtained once a sphere is replaced by a union of spheres.
\end{prop}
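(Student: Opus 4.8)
The plan is to reduce the proposition to the two lemmas already established by handling the umbilic points (points where the principal curvature vanishes) and the zero-torsion points carefully, then patching the resulting spheres together using the hypothesis that $\Gamma$ is compact, connected, and real analytic. First I would dispose of the trivial case $c=0$: if the constant geodesic curvature vanishes, then by equation (\ref{const}) applied on any analytic subarc where the principal curvature is nonzero we get $\kappa\cos\theta\equiv 0$, and since the non-umbilic set is open and dense (its complement is finite), one argues that $\Gamma$ is a geodesic there; a principal geodesic forces $b=\pm N$ along the arc, so $\Gamma$ is a planar curve contained in a plane to which $\Sigma$ is orthogonal, and analyticity propagates this across the finitely many umbilic points. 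So assume $c\neq 0$.

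Next I would partition $\Gamma$ by its finitely many umbilic points into finitely many closed analytic arcs on each of which the principal curvature is nonzero. Fix one such arc $\Gamma_0$. On $\Gamma_0$ the torsion $\tau$ is a real analytic function; either it vanishes identically or its zero set is finite. If $\tau\equiv 0$ on $\Gamma_0$, Lemma \ref{Lem2} applies and $\Gamma_0$ is an arc of a circle of radius $\frac{1}{|c|}$ lying on a sphere (the boundary sphere of radius $\frac1{|c|}$ centered at the circle's center, of which there is a one-parameter family — here one must also track the normal direction to pin down which sphere, using that $\Sigma$ meets it orthogonally) meeting $\Sigma$ orthogonally. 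If $\tau\not\equiv 0$, then on the open dense subset where $\tau\neq 0$ Lemma \ref{Lem1} gives, on each connected component, a sphere of radius $\frac{1}{|c|}$ meeting $\Sigma$ orthogonally; since the center of that sphere is $\alpha(t)+\frac{1}{c}N(t)$ (reading off (\ref{DoCC}) together with the orthogonality computation at the end of the proof of Lemma \ref{Lem1}, which shows the sphere's center lies in the direction $N$ from $\alpha$ at distance $\frac1{|c|}$), this center is a real analytic function of $t$ that is locally constant on a dense open set, hence constant on all of $\Gamma_0$; thus a single sphere $S_0$ works along the whole arc $\Gamma_0$, including at the isolated torsion zeros by continuity.

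Then I would glue. Two adjacent arcs $\Gamma_0,\Gamma_1$ share an umbilic endpoint $p$. At $p$ the surface $\Sigma$ is orthogonal to both $S_0$ and $S_1$, and both spheres pass through $p$ with the same radius $\frac1{|c|}$ and, by the center formula above, the same center $p+\frac1{c}N(p)$ — wait, this identification of the center extends continuously to $p$ from each side, giving $S_0=S_1$. Iterating around the finitely many arcs of the connected curve $\Gamma$, all the spheres coincide, yielding one sphere $S$ of radius $\frac1{|c|}$ with $\Sigma\perp S$ along all of $\Gamma$. For the piecewise real analytic case one runs the same argument on each analytic piece but no longer has matching of centers across a corner, so one simply collects the finitely many spheres into a union.

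The main obstacle I expect is the gluing step: showing that the spheres produced on adjacent arcs actually coincide rather than merely being tangent or sharing a point. This rests on upgrading the orthogonality relation into an explicit formula for the sphere's center as a real analytic function along $\Gamma$ (equal to $\alpha+\frac1cN$ with the correct sign of $c$), valid even at the isolated points where Lemma \ref{Lem1} does not directly apply, and then invoking the identity theorem for real analytic functions on the connected curve. A secondary subtlety is the bookkeeping of orientations and signs — the sign of $c$ versus the choice of $N$ — needed to guarantee the radius $\frac1{|c|}$ and to make the center formula consistent from both sides of an umbilic point; and in the $c=0$ case, confirming that a finite umbilic set cannot obstruct the conclusion that a principal geodesic is planar.
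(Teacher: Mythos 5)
Your overall strategy coincides with the paper's: dispose of $c=0$ via the classical principal-geodesic fact, cut $\Gamma$ at the finitely many umbilic points and, within each analytic arc, at the finitely many torsion zeros, apply Lemmas \ref{Lem1} and \ref{Lem2} on each piece, and then argue that all the resulting spheres coincide; the paper's gluing step is exactly this, stated more tersely as ``by the continuity of the curvature of $\Gamma$''. Your idea of making the gluing precise by exhibiting the sphere's center as a continuous function along $\Gamma$ that is locally constant off a finite set, hence constant, is a reasonable sharpening of that step.

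However, the explicit center formula you propose is wrong, and it occurs at precisely the step you single out as the crux. The computation at the end of the proof of Lemma \ref{Lem1} shows that the sphere's unit normal $-\frac{1}{R\kappa}n+\frac{\kappa'}{R\kappa^2\tau}b$ is \emph{orthogonal} to $N$; since it also lies in the normal plane of the curve, which is spanned by $N$ and the conormal $\nu$, it must be $\pm\nu$ (indeed, substituting (\ref{star}) and $\frac{D_t\kappa}{\tau\kappa}=-\tan\theta$ reduces it to $-\frac{1}{R\kappa\cos\theta}\nu$). Hence the center is $\alpha\pm\frac{1}{|c|}\nu$, displaced along the \emph{conormal}, not $\alpha+\frac{1}{c}N$; a sphere centered at $\alpha+\frac{1}{c}N$ would be tangent to $\Sigma$ at $\alpha$ rather than orthogonal to it. The same confusion appears in your $c=0$ discussion, where a principal geodesic gives $n=\pm N$ and $b=\pm\nu$, not $b=\pm N$. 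With the corrected formula your patching argument goes through verbatim ($\alpha$ and $\nu$ are continuous along $\Gamma$, so a locally constant center off a finite set is constant), so the error is local and fixable, but as written the key identity underpinning your gluing step is false.
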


\begin{proof}
It is well known that if there exists a principal geodesic on $\Sigma$, then it is a plane curve and $\Sigma$ intersects that plane orthogonally. So we may assume that the geodesic curvature is nonzero, i.e., $c\neq0$. 

We will consider the real analytic case first. Since $\Gamma$ contains finitely many points where the principal curvature vanishes, it divides into finite pieces by those points: $\Gamma=\Gamma_1\cup\Gamma_2\cup\cdots\cup\Gamma_k$. Moreover, by the analyticity, the torsion of $\Gamma$ satisfies either one of the following: it is identically zero, or it vanishes only at finitely many points.

In the first case, we may apply Lemma \ref{Lem2} to each $\Gamma_i$ and obtain a sphere $S_i$ of radius $\frac{1}{|c|}$ for each $\Gamma_i$. Then we can conclude that all $S_i$'s must be the same by the continuity of the curvature of $\Gamma$. 

For the second case, each $\Gamma_i$ divides into finite pieces by torsion-vanishing points. Then Lemma \ref{Lem1} implies that we can find a sphere of radius $\frac{1}{|c|}$ for each piece, and again by the continuity, we complete the proof.

If $\Gamma$ is a piecewise real analytic curvature line, then it is not possible to use the continuity argument at singular points to obtain only one sphere as in the above. Instead, a similar argument shows that there exists a union of spheres that intersects $\Sigma$ orthogonally along $\Gamma$.
\end{proof}

\section{The Gauss map of a free boundary minimal annulus}\label{gauss}

In this section, we show that the Gauss map of a free boundary minimal annulus in a ball $\mathbb{B}^3$ with embedded boundary curves is necessarily a one to one map. Let $\Sigma$ be a free boundary minimal annulus represented by a minimal immersion $F: A(1, R)\to\mathbb{R}^3$ from $A(1, R)\coloneqq\{z\in\mathbb{C}\ |\ 1<|z|<R\}$.

We begin by observing the Hopf differential $\Phi(z)dz^2\coloneqq(F_{zz}\cdot N)dz^2$. Here $N$ is the unit normal to the surface $\Sigma$. As in \cite{Nitsche}, we may consider the holomorphic function
\begin{align*}
f(z)\coloneqq z^2\Phi(z).
\end{align*}
It follows from the free boundary condition that $|z|=1$ and $|z|=R$ are curvature lines on $\Sigma$. If we put $z=re^{i\theta}$, this gives
\begin{align*}
\mbox{Im}\left\{\Phi(re^{i\theta})dz^2(\tfrac{\partial}{\partial\theta}, \tfrac{\partial}{\partial\theta})\right\}=\mbox{Im}\{-f(re^{i\theta})\}=0
\end{align*}
when $r=1$ and $r=R$. Since $\mbox{Re}f$ and $\mbox{Im}f$ are conjugate harmonic functions on the annulus, we can conclude that 
\begin{align*}
\mbox{Im}f(z)\equiv0,\ \mbox{Re}f(z)\equiv C_0
\end{align*}
for some real constant $C_0$. Therefore the second fundamental form $\sigma$ is given by
\begin{align}\label{secondf}
\sigma=\mbox{Re}\left\{\Phi(z)dz^2\right\}=\mbox{Re}\left\{\tfrac{C_0}{z^2}dz^2\right\}.
\end{align}
This implies that all concentric circles centered at the origin of $A(1, R)$ become curvature lines of $\Sigma$, and there are no umbilic points on the surface up to the boundaries. Therefore the normal curvature of the boundary computed on $\Sigma$ is never zero. Consequently, it follows from the free boundary condition that the geodesic curvature of the boundary computed on the unit sphere $\partial \mathbb{B}^3$ never changes its sign. So the boundary curve is locally convex on $\partial\mathbb{B}^3$. 

To show that the Gauss map $g : A(1,R) \rightarrow \mathbb{C}$ is a one to one map, it is enough to show that it is one to one on each of the boundaries $|z|=1$ and $|z|=R$ of the domain $A(1, R)$, denoted as $\partial A_1$ and $\partial A_2$, respectively. Indeed, if the Gauss map is one to one on $\partial A_i (i=1,2)$, then we can show that the Gauss map sends $A(1, R)$ univalently onto an annular domain bounded by two closed curves $g(\partial A_1)$ and $g(\partial A_2)$ as follows. After rotating the surface if necessary, we can assume that the Gauss map omits a north pole so that $g$ can be viewed as a holomorphic function defined on $A(1, R)$. It is well known by Cauchy's integral formula that winding number $n(g(\partial A_i),a)$ tells us total number of zeroes of $g(z)-a$ enclosed by a closed curve $\partial A_i$ and can be computed as the integral $\frac{1}{2\pi i}\int_{\partial A_i} \frac{g'(z)}{g(z)-a}dz$. It follows that 
\begin{align*}
n(g(\partial A_2),a) - n(g(\partial A_1),a) = \frac{1}{2\pi i}\int_{\partial A_2} \frac{g'(z)}{g(z)-a}dz -  \frac{1}{2\pi i}\int_{\partial A_1} \frac{g'(z)}{g(z)-a}dz
\end{align*}
is the number of points whose image under $g$ become $a$ so that its value is bigger than or equal to $0$ for $a\in \mathbb{C}$. On the other hand, since $g'$ is never zero on the domain $A(1, R)$ as the surface is foliated by lines of curvature and $g$ is one to one on each of the boundaries,
\begin{align*}
n(g(\partial A_2),a) - n(g(\partial A_1),a) = \frac{1}{2\pi i}\int_{g(\partial A_2)} \frac{1}{w-a}dw -  \frac{1}{2\pi i}\int_{g(\partial A_1)} \frac{1}{w-a}dw.
\end{align*}
We now see that $g(\partial A_1)$ must lie inside of $g(\partial A_2)$ since otherwise must have a point $a$ such that the above equation has a negative value. Accordingly, we can conclude that the Gauss map sends $A(1, R)$ univalently onto the annular domain bounded by two closed curves $g(\partial A_i) (i=1,2)$.

\begin{theorem}\label{prop2}
Let $F: A(1, R) \to \mathbb{R}^3$ be a free boundary minimal annulus in a ball $\mathbb{B}^3$ whose boundary curves $F(\partial A_1)$ and $F(\partial A_2)$ are embedded, where $\partial A_1$ and $\partial A_2$ denote boundary circles $\{z\in\mathbb{C}\ |\ |z|=1\}$ and $\{z\in\mathbb{C}\ |\ |z| = R\}$, respectively. Then the Gauss map $g(z) : A(1, R) \rightarrow \mathbb{C}$ of the surface is a one to one map that sends $A(1, R)$ to the doubly-connected region bounded by $g(\partial A_1)$ and $g(\partial A_2 )$.
\end{theorem}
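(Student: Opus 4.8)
The plan is to reduce the theorem, via the winding-number argument already carried out in the text immediately before the statement, to the single assertion that the Gauss map $g$ is injective on each boundary circle $\partial A_1$ and $\partial A_2$. So the real work is a boundary statement: on each component, $g$ traces out a simple closed curve in $\mathbb{C}$ (equivalently, on $\mathbb{S}^2$ after the rotation that moves the omitted value to the north pole). First I would record what the text has established: the second fundamental form is $\sigma = \mathrm{Re}\{C_0 z^{-2}dz^2\}$ with $C_0\neq 0$ (the case $C_0=0$ forces $\Sigma$ totally geodesic, hence a flat disk, not an annulus), the concentric circles $|z|=r$ are lines of curvature with nowhere-vanishing principal curvature up to and including the boundary, and consequently $g'$ is nowhere zero on $\overline{A(1,R)}$ and the boundary curves are locally convex on $\partial\mathbb{B}^3$.

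The key step is to analyze the boundary curve $F(\partial A_i)$ together with its Gauss lift. Since $F(\partial A_i)$ is embedded by hypothesis, it is a simple closed curve on the sphere $\partial\mathbb{B}^3$; because $\Sigma$ meets $\partial\mathbb{B}^3$ orthogonally, the unit normal $N$ of $\Sigma$ along $\partial A_i$ is tangent to $\partial\mathbb{B}^3$, and in fact $N$ is the outward unit conormal of $F(\partial A_i)$ inside the sphere. Thus along $\partial A_i$ the Gauss map $g$ (as a map into $\mathbb{S}^2=\partial\mathbb{B}^3$, up to the stereographic identification) sends the boundary circle to the curve of unit conormals of an embedded, locally convex simple closed curve on a round sphere. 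I would argue that such a conormal curve is itself simple: since $F(\partial A_i)$ is embedded and locally convex (geodesic curvature of one sign, never zero), its conormal indicatrix on $\mathbb{S}^2$ is an immersed closed curve with nonvanishing geodesic curvature of constant sign — i.e., a locally convex closed curve — and one shows it has winding number $\pm 1$ and no self-intersections by the same reasoning that a locally convex closed plane (or spherical) curve bounding an embedded domain has a simple tangent/normal indicatrix. Concretely, parametrize $\partial A_i$ by arclength; the conormal $N(s)$ satisfies $N'(s) = -\kappa_g(s)\,T(s)$ along the curve (in the sphere's tangent bundle), with $\kappa_g>0$ of one sign, so $N$ is a reparametrized copy of $-T$, and the unit tangent indicatrix of an embedded locally convex spherical curve is simple — this is where I would invoke a convexity/total-curvature argument (the boundary, being embedded and locally convex on $\mathbb{S}^2$, bounds a convex spherical region, and the Gauss/conormal map restricted to its boundary is a homeomorphism onto the boundary of the polar region).

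Having shown $g|_{\partial A_i}$ is injective for $i=1,2$, I would then quote verbatim the argument given in the excerpt: after a rotation making $g$ omit the north pole, $g$ is holomorphic on $A(1,R)$ with $g'\neq 0$, the difference of winding numbers $n(g(\partial A_2),a)-n(g(\partial A_1),a)$ equals the number of preimages of $a$ and is therefore $\geq 0$, which combined with $g(\partial A_i)$ being simple closed curves forces $g(\partial A_1)$ to lie inside $g(\partial A_2)$ and $g$ to be a biholomorphism onto the doubly-connected region between them. I expect the main obstacle to be the boundary injectivity claim: making rigorous the passage from ``$F(\partial A_i)$ embedded and locally convex on the round sphere'' to ``its conormal indicatrix is a simple closed curve.'' One must rule out the possibility that an embedded boundary curve of small ``length'' wraps so that although it is locally convex, the total rotation of its conormal exceeds $2\pi$; handling this likely uses that $F(\partial A_i)$ bounds a disk in $\Sigma$ so a Gauss–Bonnet computation on the boundary pins the total geodesic curvature, forcing the winding number of the conormal indicatrix to be exactly one.
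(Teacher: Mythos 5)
Your reduction to boundary injectivity and the subsequent winding-number argument match the paper exactly, and your identification of $g|_{\partial A_i}$ with the outward conormal indicatrix of the embedded, locally convex curve $F(\partial A_i)\subset\partial\mathbb{B}^3$ is correct (the free boundary condition gives $N\perp F$ and $N\perp T$, so $N=\pm F\wedge T$ along the boundary). The difficulty is that the one step you flag as ``the main obstacle'' is the entire content of the theorem, and your proposed resolution does not close it. The assertion that an embedded, locally convex closed curve on the round sphere bounds a convex region whose conormal (polar) map is a homeomorphism onto the boundary of the polar region is a genuine theorem, not a formal analogue of the planar Umlaufsatz: on $\mathbb{S}^2$, local convexity plus embeddedness does not yield global convexity for free, and proving it requires exactly the kind of supporting/sliding great-circle argument you are trying to bypass. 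Moreover, your proposed Gauss--Bonnet patch rests on a false premise: $F(\partial A_i)$ does not bound a disk in $\Sigma$, since $\Sigma$ is an annulus. (You could instead apply Gauss--Bonnet to the disk the embedded curve bounds on $\partial\mathbb{B}^3$, giving $\int\kappa_g\,ds=2\pi-\mathrm{Area}<2\pi$, hence a conormal indicatrix of length less than $2\pi$; but a closed spherical curve of length less than $2\pi$ need not be simple, so this still does not finish.) A further slip: from $N'(s)=-\kappa_g(s)\,T(s)$ you conclude that $N$ is a reparametrized copy of $-T$; it is not --- the \emph{velocity} of the curve $s\mapsto N(s)$ is proportional to $T(s)$, which is a different statement, so even granting simplicity of the tangent indicatrix you have not transferred it to $N$.

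The paper closes precisely this gap by a direct contradiction argument: if $g(z_1)=g(z_2)$ on $\partial A_1$, then $N(z_1)=N(z_2)$ forces $F(z_1)$ and $F(z_2)$ to lie on, and $F(\partial A_1)$ to be tangent to, a single great circle $C_1$; one then rotates great circles about a suitably chosen axis and locates a ``last touching point'' of the rotated circle with the boundary arc, where the conormal must point oppositely to the conormal at $F(z_1)$ and $F(z_2)$, contradicting the constancy of the sign of the geodesic curvature; the embeddedness hypothesis is used to handle the case where the last touching point escapes the hemisphere. That moving-great-circle argument is exactly the convexity statement your outline presupposes, so to complete your proof you would either have to reproduce it or supply a precise reference for the convexity/polar-duality theorem for embedded locally convex spherical curves.
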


\begin{proof}
As discussed above, it is enough to show that the Gauss map $g$ is one to one on each $\partial A_i$ $(i=1,2)$. Assume the contrary that $g(z_1)= g(z_2)$ where $z_1$ and $z_2$ are two distinct points on $\partial A_i$. For the sake of simplicity, let us assume that they are on $\partial A_1$. The free boundary condition implies that the normal vector of the surface is perpendicular to the position vector along the boundary (see Figure 1). So we can find a great circle $C_1$ on the unit sphere $\partial \mathbb{B}^3$ such that $F(\partial A_1)$ is tangent to $C_1$ at both $F(z_1)$ and $F(z_2)$.

Let us denote $S_1$ as an open hemisphere whose boundary is the great circle $C_1$. Near two points $F(z_1)$ and $F(z_2)$ the curve stays in the hemisphere $S_1$ because the geodesic curvature along the boundary curve does not change its sign as described earlier. Now we denote $\gamma$ an arc on $\partial A_1$ connecting $z_1$ and $z_2$ such that the length of $\gamma$ is less than or equal to $\pi$. Note that if $z_1$ and $z_2$ are antipodal points then there should be two choices of $\gamma$. Fix two points on $C_1$ in a way that two points are end points of a diameter of $C_1$ and a geodesic arc on a sphere connecting two fixed points contains $F(z_1)$ and $F(z_2)$. Now we can consider a variation of the great circle passing through two fixed points by rotating the circle along the diameter connecting two fixed points in a sense that as rotation starts great circles intersect $F(\gamma)$ transversally. Here two fixed points may be $F(z_1)$ and $F(z_2)$. 

As variation proceeds, we can see that there are cases where the last touching point with a great circle and the boundary curve $F(\gamma)$ exists on the hemisphere $S_1$ as denoted as $P$ in Figure 2 and Figure 3. However, the last touching point may not exist on the hemisphere as in Figure 4. We first look at the former case. In either case, the same argument can be adopted to show the contradiction. Accordingly, let us first consider when the last touching occurs while the great circle rotates with an angle less than $\pi$ as in Figure 2. Assume that the last touching point $P$ does exist in $S_1$ and is on a great circle $C_2$ as in Figure 2. As conormal vectors on $F(z_1)$ and $F(z_2)$ point opposite direction compared to the conormal vector defined on $P$, we see that geodesic curvatures have different sign at $F(z_i) (i=1,2)$ and $P$. But since the geodesic curvature of the curve cannot change its sign so we have a contradiction. 

Now we deal with the case where the last touching point does not exist in $S_1$ (see Figure 4). We denote $B_1$ a segment of the curve given by the immersion of $\gamma$ under $F$ and $B_2$ the complement of $B_1$ in the boundary curve  together with two points $F(z_i) (i=1,2)$. As we are assuming that we cannot find the last touching point as in the previous case, $B_1$ must meet $C_1$ at some point $q_1$ other than two points $F(z_1)$ and $F(z_2)$. Without loss of generality, we assume that the point $q_1$ is the closest one to $F(z_2)$. Consider a curve ${B_1}^\prime \subset B_1$ which joins $F(z_2)$ and $q_1$. Then we join from $q_1$ to $F(z_2)$ with the geodesic arc on the sphere to get a closed curve $D$. A region $S_D$ bounded by $D$ cannot be the whole hemisphere $S_1$ since $q_1$ is different from the point $F(z_1)$. Because of the embeddedness of the boundary curve, $B_2$ must either stays in $S_D$ or meets the geodesic arc connecting $F(z_2)$ and $q_1$ at some point $q_2$. The former case cannot happen as $B_2$ must contain the point $F(z_1)$. In the latter case let us denote the boundary curve connecting $F(z_2)$ and $q_2$ as ${B_2}^{\prime\prime }$. Since the curve  ${B_2}^{\prime\prime }$ is trapped in the domain $S_D$ which itself is not the whole hemisphere $S_1$, we easily see that we can rotate the great circle $C_1$ along some axis to find the last touching point with some great circle and the curve  ${B_2}^{\prime\prime }$. As in the previous case, we have a contradiction to the fact that the geodesic curvature of the boundary curve does not change sign. We have shown that the Gauss map of the surface is actually a one to one map on each of the boundaries of the annuls $A(1, R)$. 
\end{proof} 

\begin{figure}[H]
\centering
\begin{subfigure}[t]{6cm}
\centering
\scalebox{0.81}{
\begin{tikzpicture} 
 \shade[ball color = gray!40, opacity = 0.4] (0,0) circle (2cm);
      \draw[dashed] (0.5,1.4) .. controls (1,1.5) and (1.4,1.4)  .. (1.72,1);        
      \draw[dashed] (-0.5,1.4) .. controls (-0.3,1.3) and (0.3,1.3)  .. (0.5,1.4);  
      \draw[dashed] (-1.72,1) .. controls (-1.4,1.4) and (-1,1.5)  .. (-0.5,1.4);                     
      \draw (-1.72,1) .. controls (-1.3,1.3) and (-0.5,1.3)  .. (0,1);  
    \draw (0,1) .. controls (0.5,0.7) and (1.3,0.7)  .. (1.72,1);    
    \draw [-] (1.88,1.09)--(1.96,0.95) node[anchor=south] {};    
        \draw [-] (1.96,0.95)--(1.82,0.87) node[anchor=south] {};           
\draw[->] (0.903,0.525) -- (2.3, 1.335) node[anchor=west] {$F$};
\draw[->] (1.3,1.7) -- (2.1,0.42) node[anchor=west] {$N$};
\draw [->] (1.6,0.5)--(1.8,1.35) node[anchor=south] {$F_{\theta}$};
\end{tikzpicture}
}
\caption{Figure 1}
\end{subfigure}
\quad
\begin{subfigure}[t]{6cm}
\centering
\scalebox{0.81}{
\begin{tikzpicture}
 \shade[ball color = gray!40, opacity = 0.4] (0,0) circle (2cm);
  \draw (-2,0) arc (180:360:2 and 0.65) node[anchor=west] {$C_1$};
    \draw[color=red] (-2,0) arc (180:360:2 and 0.25);
  \draw[dashed] (2,0) arc (0:180:2 and 0.65);
    \draw[color=red, dashed] (2,0) arc (0:180:2 and 0.25);
      \filldraw[red] (-0.75,-0.6) circle (0pt) node[anchor=south] {$C_2$};
  \filldraw[black] (-0.75,-0.6) circle (0.8pt) node[anchor=north] {$F(z_1)$};
  \filldraw[black] (1.3,-0.51) circle (0.8pt) node[anchor=north] {$F(z_2)$};
  \filldraw[blue] (0.27,-0.3) circle (0pt) node[anchor=north] {$F(\partial A_1)$};
\draw[blue] (-1.92,0.55) .. controls (-1.6, -0.65) and (-0.72, -0.72) .. (-0.3,-0.5);      
\draw[blue]  (-0.3,-0.5) .. controls (0.25, -0.2) and (0.3, -0.2) .. (0.7,-0.4);  
\draw[blue]  (0.7,-0.4).. controls (1.25, -0.65) and (1.6, -0.55) .. (1.97,0.35);
  \filldraw[red] (0.3,-0.25) circle (0.8pt) node[anchor=south] {$P$};
\end{tikzpicture}
}
\caption{Figure 2}
\end{subfigure}
\quad
\begin{subfigure}[t]{6cm}
\centering
\scalebox{0.81}{
\begin{tikzpicture}
 \shade[ball color = gray!40, opacity = 0.4] (0,0) circle (2cm);
  \draw (-2,0) arc (180:360:2 and 0.65) node[anchor=west] {$C_1$};
    \draw[color=red] (-2,0) arc (180:360:2 and 0.25);
  \draw[dashed] (2,0) arc (0:180:2 and 0.65);
    \draw[color=red, dashed] (2,0) arc (0:180:2 and 0.25);
      \filldraw[red] (-0.75,-0.6) circle (0pt) node[anchor=south] {$C_2$};
  \filldraw[black] (-0.75,-0.6) circle (0.8pt) node[anchor=north] {$F(z_1)$};
  \filldraw[black] (1.3,-0.5) circle (0.8pt) node[anchor=north] {$F(z_2)$};
\draw[blue] (-1.92,0.55) .. controls (-1.14, -0.57) and (-0.72, -0.77) .. (-0.5,-0.5);   
\draw[blue]  (-0.5,-0.5) .. controls (-0.27, -0.153) and (-0.28, -0.153) .. (0,-0.7); 
      \filldraw[red] (-0.29,-0.25) circle (0.8pt) node[anchor=south] {$P$};
\draw[blue]  (0,-0.7).. controls (0.25, -1.1) and (0.35, -1.1) .. (0.6,-0.7);  
\draw[blue]  (0.6,-0.7) .. controls (0.8, -0.3) and (0.9, -0.37) .. (1.05,-0.44);  
\draw[blue]  (1.05,-0.44).. controls (1.28, -0.55) and (1.5, -0.57) .. (1.97,0.35);
\end{tikzpicture}
}
\caption{Figure 3}
\end{subfigure}
\quad
\begin{subfigure}[t]{6cm}
\centering
\scalebox{0.81}{
\begin{tikzpicture}
 \shade[ball color = gray!40, opacity = 0.4] (0,0) circle (2cm);
  \draw(-2,0) arc (180:360:2 and 0.6);
  \draw[dashed] (2,0) arc (0:180:2 and 0.6);
      \draw[color=red] (-2,0) arc (180:360:2 and 0.25);
          \draw[color=red, dashed] (2,0) arc (0:180:2 and 0.25);
 \filldraw (2,0) circle (0.8pt) node[anchor=north] {$F(z_2)$};
 \filldraw (-2,0) circle (0.8pt) node[anchor=north] {$F(z_1)$};
  \draw (-2,0) parabola (-1.1,1.5) node[anchor=west] {$B_1$};
 \draw (-1.1,1.5) .. controls (-1.05,1.65) and (-1,1.75) .. (-0.9,1.78);
 \draw [dashed] (-0.9,1.78) parabola (-0.3,0.5);
 \draw[dashed] (-0.3,0.5) ..controls  (-0.2, 0.2) and (0.2, 0.2) .. (0.3,0.5);
     \draw  (0.9,1.78).. controls  (1,1.75) and (1.05,1.65)  ..  (1.1,1.5);
         \draw (2,0) parabola (1.1,1.5);
\draw[dashed] (-2,0) .. controls (-1.95,0.4) and (-1.7, 0.6) ..(-1.5,0.8) node[anchor=east] {$B_2$};   
 \filldraw[red] (0,0.26) circle (0.8pt) node[anchor=south] {$$};
 \filldraw[green] (0.76,0.56) circle (0.8pt) node[anchor=south] {$q_2$};
 \filldraw[green] (0.33,0.6) circle (0.8pt) node[anchor=south] {$$};
  \filldraw[green] (0.2,0.6) circle (0pt) node[anchor=south] {$q_1$};
  \draw [blue, dashed] (0.9,1.78) parabola (0.3,0.48); 
     \draw[blue]  (0.9,1.78).. controls  (1,1.75) and (1.05,1.65)  ..  (1.1,1.5);
   \draw[blue] (2,0) parabola (1.1,1.5);
     \draw[blue, dashed] (2,0) arc (0:81:2 and 0.6);
\draw[dashed] (.5,0.3) .. controls (1.1, 1) and (1.8,1) .. (2,0);    
\end{tikzpicture}
}
\caption{Figure 4}
\end{subfigure}
\end{figure}

\begin{remark}\label{BC}
As the above theorem indicates, we can parametrize the surface with the inverse of the Gauss map. There are a few advantages to such a parametrization. First, the Weierstrass data now can be arranged as $(w,dh)$ where $w=g(z)$ and $dh$ is a holomorphic height differential expressed in a new coordinate $w$. It means that calculation with the data becomes much simpler dealing with the boundary conditions. As we now can view the Fraser-Li conjecture as determining whether the image of the Gauss map of the surface would be a concentric annulus in the complex plane or not, the holomorphic method exploiting the Weierstrass data may help to deal with the uniqueness of the surface. Also, there are interesting calculation results regarding the geodesic curvature of the boundary curve on the surface actually being constant. Indeed, one can find out calculating with the conformal coordinate $z$  given in the domain $A(1,R)$ that the geodesic curvature can be expressed in terms of the gauss map as follows:
\begin{align*}
\kappa_g=\frac{1}{|c|}\left[\frac{2}{1+|g|^2}\frac{1}{|g_\theta|}\mbox{Im}\left(\frac{g_{\theta\theta}}{g_\theta}-\frac{2|g|^2}{1+|g|^2}\frac{g_\theta}{g}\right)\right]|g_\theta|^2.
\end{align*}
We see that a term $|g_\theta|^2$ is dependent on the parameterization. However, after some calculation one can deduce that $\left[\frac{2}{1+|g|^2}\frac{1}{|g_\theta|}\mbox{Im}\left(\frac{g_{\theta \theta}}{g_\theta}-\frac{2|g|^2}{1+|g|^2} \frac{g_\theta}{g}\right)\right]$ is a term independent on the parameterization as it is indeed equal to $\lambda^2||\vec{\kappa}||$, where $\vec{\kappa}$ is a curvature vector of the Gauss image of a boundary curve on the unit sphere $\mathbb{S}^2$, and $\lambda$ is a conformal factor. The above calculation together with Proposition \ref{Thm1} shows that determining the shape of the Gauss map is actually a key to solve the Fraser-Li conjecture. Note that the critical catenoid has a property that $g_\theta$ is of constant modulus.
\end{remark}
\section{The Liouville equation and a free boundary minimal annulus}\label{Five}
Let $F: A(1, R)\to\mathbb{R}^3$ be a conformal harmonic immersion from $A(1, R)\coloneqq\{z\in\mathbb{C}\ |\ 1<|z|<R\}$, which represents an immersed free boundary minimal annulus $\Sigma$ in a ball $\mathbb{B}^3$. The theorem of Lewy \cite{Lewy} implies that $F$ can be extended as a minimal immersion defined in a slightly larger annulus $A(1-\epsilon, R+\epsilon)$ for some $\epsilon>0$. By abuse of notation, we also denote it as $F: A(1-\epsilon, R+\epsilon)\to \mathbb{R}^3$. 

Recall (\ref{secondf}) that the second fundamental form $\sigma$ is written as
\begin{align*}
\sigma=\mbox{Re}\left\{\tfrac{C_0}{z^2}dz^2\right\}
\end{align*}
for some real constant $C_0$. From this we compute
\begin{align*}
|\sigma|^2=\frac{2C_0^2}{r^4\lambda^4},
\end{align*}
where $\lambda$ is the conformal factor defined by $ds^2=\lambda^2|dz|^2$. Then Simons' identity $\Delta_{\Sigma}\log|\sigma|^2=-2|\sigma|^2$(cf. \cite{CM}, p. 71) gives
\begin{align}\label{simons}
	\frac{1}{\lambda^2}\Delta\log\frac{2C_0^2}{r^4\lambda^4}=-\frac{4C_0^2}{r^4\lambda^4}.
	\end{align}
Here we used $\Delta=4\frac{\partial}{\partial z}\frac{\partial}{\partial\bar{z}}$.

Now let $v\coloneqq\log\frac{1}{r^4\lambda^2}$. Since $\log r$ is a harmonic function, we deduce from (\ref{simons}) that $v$ satisfies the Liouville equation in $A(1-\epsilon, R+\epsilon)$:
	\begin{align*}
	\Delta v+2C_0^2e^v=0.
	\end{align*}
Moreover, as $\Sigma$ intersects $\partial\mathbb{B}^3$ orthogonally, the geodesic curvature of level curves $r=1$ and $r=R$ are equal to 1. This can be expressed as
	\begin{align*}
	-\frac{1}{r\lambda}\left(1+\frac{r}{\lambda}\frac{\partial \lambda}{\partial r}\right)&=1\ \mbox{if}\ r=1,\\
	-\frac{1}{r\lambda}\left(1+\frac{r}{\lambda}\frac{\partial \lambda}{\partial r}\right)&=-1\ \mbox{if}\ r=R.
	\end{align*}
Hence we obtain
	\begin{align*}
	\frac{\partial v}{\partial n}&=2e^{-\frac{1}{2}v}-2\ \mbox{if}\ r=1,\\
	\frac{\partial v}{\partial n}&=\frac{2}{R^2}e^{-\frac{1}{2}v}+\frac{2}{R}\ \mbox{if}\ r=R,
	\end{align*}
where $n$ is the inner unit normal to $A(1,R)$.

Combining all things above, we observe that $v$ gives rise to a solution of the following Liouville type boundary value problem:
\begin{align}\tag{E[$R$, $\epsilon$, $C_0$]}
\begin{cases}
\Delta v+2C_0^2e^v=0 &\mbox{in}\ A(1-\epsilon, R+\epsilon),\\
\frac{\partial v}{\partial n}=2e^{-\frac{1}{2}v}-2 &\mbox{if}\ |z|=1,\\
\frac{\partial v}{\partial n}=\frac{2}{R^2}e^{-\frac{1}{2}v}+\frac{2}{R} &\mbox{if}\ |z|=R.
\end{cases}
\end{align}

We now prove that the solution of E[$R$, $\epsilon$, $C_0$] also gives rise to a minimal surface orthogonal to spheres. Let $H_{R, \delta}\subset\mathbb{C}$ ($0\leq\delta<1$) be a horizontal slab given by
\begin{align*}
H_{R, \delta}=\{\xi\in\mathbb{C}\ |\ \log(1-\delta)<\mbox{Im}\xi<\log(R+\delta)\}
\end{align*}
and let $H^0_{R, \delta}\coloneqq H_{R, \delta}\cap\{\xi\in\mathbb{C}\ |\ 0\leq\mbox{Re}\xi\leq2\pi\}$. Note that $H_{R, \epsilon}$ is the universal cover of $A(1-\epsilon, R+\epsilon)$ with the covering map $z=e^{-i\xi}$, and $H^0_{R, \epsilon}$ is a fundamental domain.

\begin{theorem}\label{Liou}
Suppose that $v$ is a solution of \textup{E[$R$, $\epsilon$, $C_0$]}. There exist a minimal immersion $X: H_{R, \epsilon}\to\mathbb{R}^3$ and unit spheres $S_{O_1}$, $S_{O_2}$ (centered at $O_1$ and $O_2$, respectively) satisfying the following conditions:
\begin{itemize}
\item[(1)] For the conformal factor $\Lambda$ of $X$ given by $ds^2=\Lambda^2|d\xi|^2$, 
\begin{align*}
v(e^{-i\xi})=\log\frac{1}{\Lambda^2(\xi)|e^{-i\xi}|^2},\ \forall \xi\in H_{R, \epsilon}.
\end{align*}
\item[(2)] $S_{O_1}$ and $S_{O_2}$ intersect $X(H_{R, \epsilon})$ orthogonally along level curves $\textup{Im}\xi=0$ and $\textup{Im}\xi=\log R$, respectively.
\item[(3)] $X(H_{R, \epsilon})=\bigcup\limits_{n\in\mathbb{Z}}T^n\cdot X(H^0_{R, \epsilon})$ for some rigid motion $T$ in $\mathbb{R}^3$ such that $T\cdot X(H^0_{R, \epsilon})=X(H^0_{R, \epsilon}+2\pi)$.
\end{itemize}
We call $X(H^0_{R, 0})\subset X(H^0_{R, \epsilon})$ a fundamental piece.
\end{theorem}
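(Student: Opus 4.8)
The plan is to read \textup{E[$R$, $\epsilon$, $C_0$]} as the Gauss equation of a minimal surface with constant Hopf differential, to build $X$ directly from the Weierstrass data furnished by the Liouville developing map of $v$, and then to extract the two spheres from Proposition \ref{Thm1}. First I pass to the universal cover: pull $v$ back by $z=e^{-i\xi}$ and set $v^{\ast}(\xi):=v(e^{-i\xi})+2\,\mathrm{Im}\,\xi$. Since $\mathrm{Im}\,\xi$ is harmonic and the Laplacian transforms conformally with factor $|e^{-i\xi}|^{2}=e^{2\mathrm{Im}\,\xi}$, the equation $\Delta v+2C_0^{2}e^{v}=0$ becomes the standard Liouville equation $\Delta_\xi v^{\ast}+2C_0^{2}e^{v^{\ast}}=0$ on the simply connected slab $H_{R,\epsilon}$, i.e.\ $e^{v^{\ast}}|d\xi|^{2}$ has constant Gauss curvature $C_0^{2}$. (When $C_0=0$, $v$ is harmonic and the construction degenerates to a piece of a plane; I treat that trivial case separately and assume $C_0\neq0$ below.) By elliptic regularity $v$, hence $v^{\ast}$, is real analytic, so $v^{\ast}$ admits a \emph{developing map}: a locally univalent meromorphic $h:H_{R,\epsilon}\to\widehat{\mathbb C}$ with
\begin{align*}
e^{v^{\ast}}=\frac{4|h'|^{2}}{C_0^{2}\,(1+|h|^{2})^{2}},
\end{align*}
unique up to postcomposition with a rotation of $\widehat{\mathbb C}$ (the isometry group of the constant-curvature target).

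Next I take the Weierstrass data $(g,\omega):=\bigl(h,\ \tfrac{C_0}{h'}\,d\xi\bigr)$, the sign of $C_0$ being immaterial. Local univalence makes $h'$ zero-free away from the poles of $h$, which are simple and at which $h'$ has a double pole; hence $\omega$ is holomorphic and has a double zero exactly at each simple pole of $g$, so $(g,\omega)$ is a legitimate Weierstrass pair. As $H_{R,\epsilon}$ is simply connected the period condition is vacuous, so
\begin{align*}
X(\xi):=\mathrm{Re}\int_{\xi_{0}}^{\xi}\left[\tfrac12(1-g^{2})\omega,\ \tfrac i2(1+g^{2})\omega,\ g\omega\right]
\end{align*}
is a well-defined minimal immersion. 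Its conformal factor is $\Lambda=\tfrac12(1+|g|^{2})\,|\omega/d\xi|=\tfrac{|C_0|(1+|h|^{2})}{2|h'|}=e^{-v^{\ast}/2}$, which is smooth and strictly positive throughout $H_{R,\epsilon}$; therefore $v(e^{-i\xi})=v^{\ast}-2\,\mathrm{Im}\,\xi=\log\frac{1}{\Lambda^{2}|e^{-i\xi}|^{2}}$, which is assertion (1). Its second fundamental form is $\mathrm{Re}(dg\cdot\omega)=\mathrm{Re}(\pm C_0\,d\xi^{2})$, i.e.\ the pullback of the differential in \eqref{secondf}, so the Hopf differential of $X$ in the coordinate $\xi$ is a nonzero real constant; in particular every level curve $\{\mathrm{Im}\,\xi=\mathrm{const}\}$ is a line of curvature whose principal curvature $|C_0|/\Lambda^{2}$ never vanishes, so $X$ has no umbilic points.

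For assertion (2), unwinding the computation preceding the statement of \textup{E[$R$, $\epsilon$, $C_0$]}, the two Neumann conditions say precisely that the geodesic curvature computed on $X$ of $L_{1}:=\{\mathrm{Im}\,\xi=0\}$, respectively $L_{2}:=\{\mathrm{Im}\,\xi=\log R\}$, is identically $\pm1$. Hence each $X|_{L_{i}}$ is a real-analytic line of curvature with no umbilic points and constant geodesic curvature of modulus $1$; by Lemma \ref{Lem1} where the torsion of $X(L_{i})$ is nonzero and Lemma \ref{Lem2} on the zero set of the torsion (or on all of $L_{i}$ if it vanishes identically), there is near every point a sphere of radius $1$ meeting $X$ orthogonally, and the analyticity-plus-connectedness argument from the proof of Proposition \ref{Thm1} forces all of these to coincide in a single unit sphere $S_{O_{i}}$, along which $X$ meets $S_{O_{i}}$ orthogonally. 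No compactness of $L_{i}$ is needed here, since the absence of umbilic points means $L_{i}$ is not subdivided by them.

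For assertion (3), since $v$ descends to the annulus, $v^{\ast}$ is $2\pi$-periodic, so $h(\xi+2\pi)$ is again a developing map for $v^{\ast}$ and therefore $h(\xi+2\pi)=U\circ h(\xi)$ for a fixed $U\in\mathrm{PSU}(2)$; writing $U(w)=\frac{aw+b}{-\bar b w+\bar a}$ with $|a|^{2}+|b|^{2}=1$ one gets $\omega(\xi+2\pi)=(-\bar b\,g(\xi)+\bar a)^{2}\,\omega(\xi)$. The classical covariance of the Weierstrass integrand under rotations — replacing $(g,\omega)$ by $\bigl(U\circ g,\ (-\bar b g+\bar a)^{2}\omega\bigr)$ applies to the integrand the rotation $\rho_{U}\in\mathrm{SO}(3)$ lifting $\pm U\in\mathrm{SU}(2)$ — then yields $X(\xi+2\pi)=T\cdot X(\xi)$, where $T(p):=\rho_{U}\,p+X(\xi_{0}+2\pi)$ is a rigid motion. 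Thus $T\cdot X(H^{0}_{R,\epsilon})=X(H^{0}_{R,\epsilon}+2\pi)$ and $X(H_{R,\epsilon})=\bigcup_{n\in\mathbb Z}T^{n}\cdot X(H^{0}_{R,\epsilon})$, which is (3); in particular $T$ preserves each $S_{O_{i}}$ because $L_{i}+2\pi=L_{i}$. I expect the real obstacle to be the step in the second paragraph: certifying that the explicit Weierstrass immersion simultaneously realizes the prescribed conformal metric \emph{and} a constant Hopf differential (this is exactly what lets Proposition \ref{Thm1} apply along $L_{1}$ and $L_{2}$), together with the bookkeeping of the double cover $\mathrm{SU}(2)\to\mathrm{SO}(3)$ needed to promote the developing-map monodromy $U$ to a genuine rigid motion of $\mathbb R^{3}$.
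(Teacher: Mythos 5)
Your construction is correct, and for parts (1) and (2) it is essentially the paper's own proof: pass to the universal cover via $\tilde v(\xi)=v(e^{-i\xi})+2\,\mathrm{Im}\,\xi$, represent the resulting Liouville solution by a locally univalent developing map $h$, feed $(h,\tfrac{C_0}{h'}d\xi)$ (the paper's $(C_0h,-\tfrac{1}{h_\xi}d\xi)$ up to the harmless rescaling $h\mapsto C_0h$ and a sign) into the Weierstrass formula, read off the conformal factor for (1), and combine the constant Hopf differential $\mathrm{Re}(C_0\,d\xi^2)$ with the boundary Neumann condition and Proposition \ref{Thm1} to produce the two unit spheres for (2). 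Your explicit remark that compactness of the level curves is not needed because there are no umbilic points is a small improvement in care over the paper, which applies Proposition \ref{Thm1} to the noncompact level lines without comment. Where you genuinely diverge is part (3): the paper observes that $\tilde v$ is $2\pi$-periodic, hence both fundamental forms are $2\pi$-periodic, and invokes the fundamental theorem of surfaces to get congruence of the translates, whereas you track the monodromy of the developing map, $h(\xi+2\pi)=U\circ h(\xi)$ with $U\in\mathrm{PSU}(2)$, and use the equivariance of the Weierstrass integrand under $(g,\omega)\mapsto(U\circ g,(-\bar b g+\bar a)^2\omega)$ to exhibit $T$ concretely as $p\mapsto\rho_U p+{}$const. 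Both are valid; the paper's route is shorter and avoids any cocycle computation, while yours identifies the rotational part of $T$ explicitly as the lift of the developing-map monodromy (and there is in fact no $\mathrm{SU}(2)\to\mathrm{SO}(3)$ lifting issue, since the induced rotation depends only on $U$ modulo $\pm1$), which is potentially useful information about the fundamental piece that the paper's argument does not surface.
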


\begin{proof}
We apply the same method as in \cite{Jimenez}. Consider the function $\tilde{v}$ defined by
\begin{align*}
\tilde{v}(\xi)=v(e^{-i\xi})+2\mbox{Im}\xi
\end{align*}
in the universal cover $H_{R, \epsilon}$. Then a simple computation shows that $\tilde{v}$ satisfies the following Liouville equation:
\begin{align}\label{liouuniv}
\begin{cases}
\Delta \tilde{v}+2C_0^2e^{\tilde{v}}=0 &\mbox{in}\ H_{R, \epsilon},\\
\frac{\partial \tilde{v}}{\partial n}=2e^{-\frac{1}{2}\tilde{v}} &\mbox{if}\ \mbox{Im}\xi=0\ \mbox{or} \log R.
\end{cases}
\end{align}
Here we used $\Delta=4\frac{\partial}{\partial \xi}\frac{\partial}{\partial \overline{\xi}}$, and $n$ is the inner unit normal to $H_{R, 0}$. Since $H_{R, \epsilon}$ is simply-connected, we observe that the solution $\tilde{v}$ is given by
\begin{align}\label{general}
\tilde{v}=\log \frac{4|h_{\xi}|^2}{(1+C_0^2|h|^2)^2}
\end{align}
for some locally univalent meromorphic function $h$ in $H_{R, \epsilon}$. 

Now we consider a minimal immersion $X: H_{R, \epsilon}\to\mathbb{R}^3$ defined by the Weierstrass data $(g, \omega)$:
\begin{align*}
g=C_0h,\ \omega=-\frac{1}{h_{\xi}}d\xi.
\end{align*}
As $h$ is locally univalent, it can only have simple poles. Therefore the Weierstrass data satisfies the condition concerning the order of zeros and poles mentioned in Section \ref{prelim}. Moreover, there are no period problems as $H_{R, \epsilon}$ is simply-connected, so we obtained a well-defined minimal immersion.

The induced metric is given by 
\begin{align*}
\Lambda^2|d\xi|^2=\frac{1}{4}|\omega|^2(1+|g|^2)^2=\frac{(1+C_0^2|h|^2)^2}{4|h_{\xi}|^2}|d\xi|^2,
\end{align*}
which implies that 
\begin{align*}
\Lambda^2=\frac{(1+C_0^2|h|^2)^2}{4|h_{\xi}|^2}.
\end{align*}
Hence we obtain
\begin{align*}
\log\frac{1}{\Lambda^2(\xi)|e^{-i\xi}|^2}=\log\frac{4|h_{\xi}|^2}{e^{2\text{Im}\xi}(1+C_0^2|h|^2)^2}=\tilde{v}(\xi)-2\mbox{Im}\xi=v(e^{-i\xi}),
\end{align*}
and this proves $(1)$.

On the other hand, it follows easily from the boundary condition in (\ref{liouuniv}) that the geodesic curvature of level curves $\mbox{Im}\xi=0$ and $\mbox{Im}\xi=\log R$ are equal to $1$. Moreover, the second fundamental form is
\begin{align}\label{SF}
\mbox{Re}\{dg\cdot\omega\}=\mbox{Re}\{-C_0d\xi^2\}
\end{align}
so that each level curve of $\mbox{Im}\xi$ is a line of curvature on the minimal surface. Then Proposition \ref{Thm1} implies that there exist unit spheres $S_{O_1}$, centered at $O_1$, and $S_{O_2}$, centered at $O_2$, such that $S_{O_1}$ and $S_{O_2}$ intersect $X(H_{R, \epsilon})$ orthogonally along level curves $\mbox{Im}\xi=0$ and $\mbox{Im}\xi=\log R$, respectively. Thus $(2)$ is obtained.

To prove $(3)$ we first observe that $\tilde{v}$ is $2\pi$-periodic:
\begin{align*}
\tilde{v}(\xi+2\pi)=v(e^{-i(\xi+2\pi)})+2\mbox{Im}(\xi+2\pi)=v(e^{-i\xi})+2\mbox{Im}\xi=\tilde{v}(\xi).
\end{align*}
This gives
\begin{align*}
\Lambda^2(\xi+2\pi)|d(\xi+2\pi)|^2=e^{-\tilde{v}(\xi+2\pi)}|d\xi|^2=e^{-\tilde{v}(\xi)}|d\xi|^2=\Lambda^2(\xi)|d\xi|^2,
\end{align*}
and we also have
\begin{align*}
\mbox{Re}\{-C_0d(\xi+2\pi)^2\}=\mbox{Re}\{-C_0d\xi^2\}.
\end{align*}
Therefore the first and second fundamental forms are $2\pi$-periodic. By the fundamental theorem of surfaces, $X(H^0_{R, \epsilon}+2n\pi)$ $\forall n\in\mathbb{Z}$ are congruent to $X(H^0_{R, \epsilon})$. If we denote by $T$ a rigid motion sending $X(H^0_{R, \epsilon})$ to $X(H^0_{R, \epsilon}+2\pi)$, i.e., $T\cdot X(H^0_{R, \epsilon})=X(H^0_{R, \epsilon}+2\pi)$, then one may deduce that 
\begin{align*}
X(H_{R, \epsilon})=\bigcup\limits_{n\in\mathbb{Z}}T^n\cdot X(H^0_{R, \epsilon}).
\end{align*} 
\end{proof}
We should mention that the minimal surfaces constructed in Theorem \ref{Liou} contain all immersed free boundary minimal annuli in a ball. It is clear from the argument at the beginning of this section with the aid of (\ref{SF}) and condition (1) since $d\xi=\frac{i}{z}dz$. Also, minimal annuli orthogonal to two spheres are contained in these examples.

Condition (3) implies that a minimal immersion obtained from the Liouville equation E[$R$, $\epsilon$, $C_0$] can be divided into pieces congruent to the fundamental piece. Therefore we have the following result concerning the structure of immersed free boundary minimal annuli:
\begin{cor}\label{struc}
Let $\Sigma$ be an immersed free boundary minimal annulus in $\mathbb{B}^3$. After rotating the surface, one can find $\Sigma_0\subseteq\Sigma$ such that 
\begin{align*}
\Sigma=\bigcup\limits_{n\in\mathbb{Z}}(Rot_{\frac{2k}{N}\pi})^n\cdot\Sigma_0
\end{align*}
for some $N\in\mathbb{Z}_{>0}$ and $0\leq k\leq N-1$, where $Rot_{\frac{2k}{N}\pi}$ means the rotation by $\frac{2k}{N}\pi$ with respect to the $x_3$-axis.
\end{cor}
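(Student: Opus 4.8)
The strategy is to recognize $\Sigma$ as one of the minimal surfaces produced by Theorem \ref{Liou} and then to read the decomposition off from its condition (3). I would first represent $\Sigma$ by a conformal harmonic immersion $F\colon A(1-\epsilon,R+\epsilon)\to\mathbb{R}^3$, extended past the boundary as at the start of Section \ref{Five}, and set $v=\log\frac{1}{r^4\lambda^2}$; by the computation of that section $v$ solves E[$R$, $\epsilon$, $C_0$]. Applying Theorem \ref{Liou} to this $v$ produces a minimal immersion $X\colon H_{R,\epsilon}\to\mathbb{R}^3$ and a rigid motion $T$ with properties (1)--(3). Next I would check that $X$ and $F\circ\pi$, where $\pi(\xi)=e^{-i\xi}$ is the covering map $H_{R,\epsilon}\to A(1-\epsilon,R+\epsilon)$, have the same first and second fundamental forms: condition (1) equates the conformal factors, and (\ref{SF}) together with $d\xi=\frac{i}{z}\,dz$ turns the second fundamental form $\mbox{Re}\{-C_0\,d\xi^2\}$ of $X$ into $\mbox{Re}\{\tfrac{C_0}{z^2}\,dz^2\}$, which is (\ref{secondf}) pulled back by $\pi$. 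Since $H_{R,\epsilon}$ is simply connected, the fundamental theorem of surfaces supplies a rigid motion $\Psi$ with $X=\Psi\circ F\circ\pi$; restricting to the sub-slab $H_{R,0}$, which lies over $\overline{A(1,R)}$, this gives $\Psi(\Sigma)=X(H_{R,0})$. This is precisely the assertion, made in the remark preceding the corollary, that the surfaces of Theorem \ref{Liou} exhaust all immersed free boundary minimal annuli in $\mathbb{B}^3$.

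I would then transport the tiling. Put $\Sigma_0:=\Psi^{-1}\bigl(X(H^0_{R,0})\bigr)$ and $\tilde T:=\Psi^{-1}\circ T\circ\Psi$. Restricting condition (3) to $H_{R,0}$ yields $\Psi(\Sigma)=\bigcup_{n\in\mathbb{Z}}T^n X(H^0_{R,0})$, hence
\begin{align*}
\Sigma=\bigcup_{n\in\mathbb{Z}}\tilde T^n\,\Sigma_0 .
\end{align*}
Since $\tilde T$ merely shifts the index in this union, $\tilde T(\Sigma)=\Sigma$, so $\tilde T$ lies in the group $G$ of rigid motions of $\mathbb{R}^3$ preserving $\Sigma$, and $\tilde T$ is orientation preserving because $T$ satisfies $X(\xi+2\pi)=T(X(\xi))$ while $\xi\mapsto\xi+2\pi$ preserves the orientation of $H_{R,\epsilon}$. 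Now $\Sigma$ together with its boundary is compact, so $G$ is a compact subgroup of the Euclidean group; in particular the orbits of $\tilde T$ are bounded, which forces the orientation-preserving motion $\tilde T$ to be a rotation about some axis $\ell$. Moreover, integrating the harmonic coordinate functions and using the free boundary condition $\partial_\nu F=F$ on $\partial\Sigma$ gives $\int_{\partial\Sigma}F\,ds=\int_{\partial\Sigma}\partial_\nu F\,ds=\int_{\Sigma}\Delta_\Sigma F\,dA=0$, so the centroid of $\partial\Sigma$ is the origin; since $G$ preserves $\partial\Sigma$ it fixes the origin, and in particular $\ell$ passes through the origin.

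To conclude I would rule out infinite order for $\tilde T$: if $\tilde T$ had infinite order, the closure of $\langle\tilde T\rangle$ in $G$ would be a one-parameter group of rotations about $\ell$, making $\Sigma$ a surface of revolution and hence the critical catenoid, for which the statement is immediate (take any rotation by a rational multiple of $\pi$ about the axis and the corresponding angular sector as $\Sigma_0$). Otherwise $\tilde T$ has finite order $N$ and equals the rotation by $\frac{2k}{N}\pi$ about $\ell$ for some $0\le k\le N-1$; rotating $\Sigma$ about the origin so that $\ell$ becomes the $x_3$-axis turns $\tilde T$ into $Rot_{\frac{2k}{N}\pi}$ and yields $\Sigma=\bigcup_{n\in\mathbb{Z}}(Rot_{\frac{2k}{N}\pi})^n\Sigma_0$.

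The part I expect to be the real work is upgrading ``$\tilde T$ is the rigid motion supplied by Theorem \ref{Liou}(3)'' to ``$\tilde T$ is a finite-order rotation about an axis through the center of $\mathbb{B}^3$'': this uses the compactness of $\Sigma$ (to make $G$ compact and to kill any screw component of $\tilde T$), the centroid identity for $\partial\Sigma$, and a separate treatment of the surface-of-revolution case via the classification of minimal surfaces of revolution. A smaller but genuine point is the orientation and second-fundamental-form bookkeeping needed to invoke the fundamental theorem of surfaces in identifying $X$ with $\Psi\circ F\circ\pi$.
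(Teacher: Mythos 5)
Your argument is correct and shares the paper's skeleton --- both reduce the corollary to condition (3) of Theorem \ref{Liou} and then upgrade the rigid motion $T$ to a finite-order rotation about an axis through the center of the ball --- but the two upgrading steps are carried out by genuinely different means. To see that $T$ fixes the origin, the paper argues that $T$ must preserve the unit sphere because $\partial\Sigma\subset\bigcap_n T^n\cdot\partial\mathbb{B}^3$, and concludes $T\in SO(3)$; you instead use the balancing identity $\int_{\partial\Sigma}F\,ds=\int_{\Sigma}\Delta_\Sigma F\,dA=0$ to place the arclength centroid of $\partial\Sigma$ at the origin and note that the compact symmetry group of $\Sigma$ must fix it. Your version is somewhat more robust, since the paper's step tacitly requires $\partial\Sigma$ to determine the unit sphere uniquely, which degenerates exactly when a boundary component is a circle, whereas the flux computation has no such caveat. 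For finiteness of the order, the paper splits into cases according to whether the fundamental piece is simply- or doubly-connected and asserts that gluing congruent simply-connected pieces into a compact annulus forces $T^N=\mathrm{id}$; you instead observe that an infinite-order rotation has dense powers in the circle group, which would make $\Sigma$ rotationally invariant and hence the critical catenoid, for which the statement is immediate. This trades the paper's somewhat informal gluing argument for a cleaner compactness argument, at the price of invoking the classification of rotationally symmetric free boundary minimal annuli (a fact the paper recalls only after the corollary). Your preliminary identification of $\Sigma$ with $X(H_{R,0})$ via the fundamental theorem of surfaces, including the check that $d\xi=\frac{i}{z}dz$ converts (\ref{SF}) into (\ref{secondf}), is precisely the content of the remark the paper places just before the corollary, so that portion coincides with the intended reading; your explicit attention to the sign of $C_0$ and to orientation when matching the two immersions is a detail the paper leaves implicit.
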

\begin{proof}
By (3) in Theorem \ref{Liou}, there exist $\Sigma_0\subseteq\Sigma$ corresponds to the fundamental piece $X(H^0_{R, 0})$ and a rigid motion $T$ such that 
\begin{align*}
\Sigma=\bigcup\limits_{n\in\mathbb{Z}}T^n\cdot \Sigma_0.
\end{align*}
This implies that $\Sigma$ is invariant under $T$, and therefore $T$ must preserve the sphere $\partial \mathbb{B}^3$ since $\partial\Sigma\subset\bigcap\limits_{n\in\mathbb{Z}}T^n\cdot\partial\mathbb{B}^3$. Hence $T$ is an element of the special orthogonal group $SO(3)$.
First, assume that $\Sigma_0$ is simply-connected. To obtain an annulus by gluing congruent pieces $T^n\cdot\Sigma_0$, we should have $T^N=id$ for some $N$. Thus $T$ is a rotation by $\frac{2k}{N}\pi$ with respect to some axis. After rotating the surface, we may take $T=Rot_{\frac{2k}{N}\pi}$.
If $\Sigma_0$ is doubly-connected, then $T=id$, and we complete the proof.
\end{proof}
\begin{remark}\label{struc2}
For a minimal annulus orthogonal to two spheres, centered at $O_1$ and $O_2$, a similar result holds with $T$ a rotation with respect to $\overleftrightarrow{O_1O_2}$.
\end{remark}
The above corollary shows that studying fundamental pieces obtained from Liouville solutions will make us have a deeper intuition on free boundary minimal annuli in a ball. However, some Liouville solutions give rise to the case $O_1\neq O_2$. 

To reduce the case, we may impose the following condition to the solution $v$ of E[$R$, $\epsilon$, $C_0$]:
\begin{align}\label{AL}
2\int_0^{2\pi}\int_1^R\frac{1}{r^3}e^{-v}\textup{d} r\textup{d} \theta=\int_0^{2\pi}e^{-\frac{1}{2}v(1,\theta)}\textup{d} \theta+\int_0^{2\pi}\frac{1}{R}e^{-\frac{1}{2}v(R, \theta)}\textup{d} \theta.
\end{align}
This relation comes from the fact that every free boundary minimal surface $\Sigma$ in $\mathbb{B}^3$ satisfies $2|\Sigma|=|\partial\Sigma|$. 
\begin{prop}
Let $v$ be a solution of E[$R$, $\epsilon$, $C_0$] satisfying (\ref{AL}) and let $X: H_{R, \epsilon}\to\mathbb{R}^3$ be a minimal immersion obtained from $v$ as in Theorem \ref{Liou}. Denote by $\Sigma_0=X(H^0_{R, 0})$ a fundamental piece. If $O_1\neq O_2$, then
\begin{align}\label{fluxcond}
\int_{\Gamma_1}\nu_1ds=0,\ \int_{\Gamma_2}\nu_2ds=0,
\end{align} 
where $\nu_1$ and $\nu_2$ are outward unit conormal vectors to $\Gamma_1\coloneqq\overline{\Sigma}_0\cap S_{O_1}$ and $\Gamma_2\coloneqq\overline{\Sigma}_0\cap S_{O_2}$, respectively. 
\end{prop}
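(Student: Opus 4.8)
The plan is to extract two independent linear conditions on the flux vectors $\Phi_i:=\int_{\Gamma_i}\nu_i\,ds$: one forcing them into the plane $\{O_2-O_1\}^{\perp}$, and one forcing them onto the line $\mathbb{R}(O_2-O_1)$, so that $O_1\neq O_2$ makes them vanish.

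\emph{Geometry of the monodromy.} By construction ((3) of Theorem \ref{Liou}, together with the fundamental theorem of surfaces, since the first and second fundamental forms are $2\pi$-periodic) the rigid motion $T$ intertwines $X$ with $\xi\mapsto\xi+2\pi$, i.e.\ $TX(\xi)=X(\xi+2\pi)$ for all $\xi$. Hence $T$ carries the part of $X(H_{R,\epsilon})$ lying on $S_{O_1}$ to itself, and likewise for $S_{O_2}$, so $T$ fixes both $O_1$ and $O_2$; as $O_1\neq O_2$ this is exactly the situation of Remark \ref{struc2}: $T$ is a rotation about the axis $\ell:=\overleftrightarrow{O_1O_2}$. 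Write $u$ for a unit vector along $\ell$ and $A$ for the rotation part of $T$, so $Au=u$. Finally, along $\Gamma_i$ the surface meets the unit sphere $S_{O_i}$ orthogonally ((2) of Theorem \ref{Liou}), so by the Terquem--Joachimsthal phenomenon used throughout the paper the conormal $\nu_i$ is $\pm$ the unit normal of $S_{O_i}$; the sign of the geodesic curvature built into \textup{E[$R$, $\epsilon$, $C_0$]} (it is $+1$, as for a genuine free boundary surface in $\mathbb{B}^3$) selects the outward normal, so $\nu_i=X-O_i$ on $\Gamma_i$ and $\Phi_i=\int_{\Gamma_i}(X-O_i)\,ds$.

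\emph{The area identity forces $\Phi_i\perp u$.} Unwinding $v=\log\tfrac1{r^{4}\lambda^{2}}$ (and $d\xi=\tfrac iz\,dz$), condition (\ref{AL}) is precisely $2|\Sigma_0|=|\Gamma_1|+|\Gamma_2|$, the relation $2|\Sigma|=|\partial\Sigma|$ for the fundamental piece. On the other hand $\operatorname{div}_{\Sigma_0}(X-O_1)\equiv 2$ since $\Sigma_0$ is minimal, so the divergence theorem gives $2|\Sigma_0|=\int_{\partial\Sigma_0}\langle X-O_1,\nu\rangle\,ds$ with $\partial\Sigma_0=\Gamma_1\cup\Gamma_2\cup L_0\cup L_{2\pi}$, where $L_0,L_{2\pi}$ are the images of the edges $\mathrm{Re}\,\xi=0,2\pi$. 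Using $X(2\pi+iy)=TX(iy)$, the $2\pi$-periodicity of $\Lambda$, the fact that the outward conormals of $\Sigma_0$ along $L_0$ and $L_{2\pi}$ are related by $-A$, and $TO_1=O_1$, the $L_0$- and $L_{2\pi}$-contributions cancel. Along $\Gamma_1$, $\langle X-O_1,\nu_1\rangle=|X-O_1|^2=1$, and along $\Gamma_2$, $\langle X-O_1,\nu_2\rangle=|X-O_2|^2+\langle O_2-O_1,X-O_2\rangle=1+\langle O_2-O_1,X-O_2\rangle$. Hence $2|\Sigma_0|=|\Gamma_1|+|\Gamma_2|+\langle O_2-O_1,\Phi_2\rangle$, and comparison with (\ref{AL}) yields $\langle O_2-O_1,\Phi_2\rangle=0$; the same computation based at $O_2$ gives $\langle O_2-O_1,\Phi_1\rangle=0$. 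Thus $\Phi_1,\Phi_2\perp u$.

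\emph{Balancing forces $\Phi_i\parallel u$.} The coordinate functions of $X$ are harmonic and $w\times(x-O_i)$ is a Killing field for every $w$, so on a compact minimal surface $\Sigma'$ with boundary one has $\int_{\partial\Sigma'}\nu\,ds=0$ and $\int_{\partial\Sigma'}\langle w\times(X-O_i),\nu\rangle\,ds=0$. Suppose $T=\operatorname{id}$, so that $X$ is $2\pi$-periodic and descends to an immersed minimal annulus $\widehat\Sigma$ over $A(1,R)$ with $\partial\widehat\Sigma=\Gamma_1\sqcup\Gamma_2$. Apply the rotational identity based at $O_2$: along $\Gamma_2$ the conormal $\nu_2=X-O_2$ is radial, so $\langle w\times(X-O_2),\nu_2\rangle=0$; along $\Gamma_1$, writing $X-O_2=(X-O_1)-(O_2-O_1)$ and using $O_2-O_1\parallel u$, the integrand equals $-|O_2-O_1|\,\langle w,u\times(X-O_1)\rangle$, and integrating gives $-|O_2-O_1|\,\langle w,u\times\Phi_1\rangle$. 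As this vanishes for all $w$, $u\times\Phi_1=0$, i.e.\ $\Phi_1\parallel u$; the symmetric argument based at $O_1$ gives $\Phi_2\parallel u$. Combined with the previous step, $\Phi_1=\Phi_2=0$ (and the force balancing $\Phi_1+\Phi_2=0$ is consistent).

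\emph{Main obstacle.} The hard part is the last step when $T$ is a nontrivial rotation, so $X$ does not close up over $A(1,R)$: the rotational balancing must then be run on $\Sigma_0$ itself, where it picks up a boundary term along $L_0\cup L_{2\pi}$ of the form $(I-A)W$ for an unknown lateral vector $W$, which lies in the same plane $u^{\perp}$ as $u\times\Phi_i$; in fact the full system of force and torque identities on $\Sigma_0$ turns out to be exactly rank-deficient enough that it no longer pins $\Phi_i$ down beyond $\Phi_i\perp u$. Closing this gap seems to require either (i) the explicit Weierstrass data $(C_0h,-h_\xi^{-1}d\xi)$ of Theorem \ref{Liou} --- equivalently, control of the elliptic monodromy of $h$ under $\xi\mapsto\xi+2\pi$ --- so as to evaluate $\Phi_i$ as the relevant period of the conjugate immersion $X^{*}$; or (ii) a preliminary argument showing that (\ref{AL}) together with $O_1\neq O_2$ already forces $T=\operatorname{id}$, after which Step 3 applies verbatim. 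This is where I expect the real work to lie.
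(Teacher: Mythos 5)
Your route is the paper's route. The paper sets $Y=X-O_1$, $\rho=O_2-O_1$, writes $\partial\Sigma_0=\Gamma_1\cup\Gamma_2\cup C_1\cup C_2$ (your $L_0$, $L_{2\pi}$), and combines the area identity $2|\Sigma_0|=|\Gamma_1|+|\Gamma_2|$ coming from (\ref{AL}) with the divergence theorem (\ref{Dot}) and the torque balance (\ref{Wed}), using $\nu_1=Y$ on $\Gamma_1$ and $\nu_2=Y-\rho$ on $\Gamma_2$, to get $\rho\cdot\int_{\Gamma_2}\nu_2\,ds=0$ and $\rho\wedge\int_{\Gamma_2}\nu_2\,ds=0$, hence $\int_{\Gamma_2}\nu_2\,ds=0$; it then repeats the computation with the position vector based at $O_2$. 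Your first two steps reproduce this (your identity $\int_{\partial\Sigma'}\langle w\times(X-O_i),\nu\rangle\,ds=0$ for all $w$ is just (\ref{Wed}) paired with $w$). The one place where you stop and the paper does not is the lateral boundary term: the paper's equation (\ref{Van}) asserts that \emph{both} $\int_{C_1\cup C_2}Y\cdot\nu\,ds$ and $\int_{C_1\cup C_2}Y\wedge\nu\,ds$ vanish, justified in one sentence by the observation that the outer conormal along $C_1$ is the inner conormal along $C_2$ of the adjacent piece. With (\ref{Van}) granted there is no case distinction on $T$, and the proof closes immediately; in particular the paper never reduces to $T=\mathrm{id}$, nor does it invoke the monodromy of $h$.

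So the ``main obstacle'' you isolate is exactly the paper's claim (\ref{Van}), and your diagnosis of where the tension sits is accurate. Writing $A$ for the rotation part of $T$ and using $Y\circ T=AY$, $\nu\circ T=-A\nu$, the edge identification gives $\int_{C_2}Y\cdot\nu\,ds=-\int_{C_1}Y\cdot\nu\,ds$ (the cancellation you proved), but for the vector quantity it gives $\int_{C_2}Y\wedge\nu\,ds=-A\int_{C_1}Y\wedge\nu\,ds$, so the lateral torque equals $(I-A)W$ with $W=\int_{C_1}Y\wedge\nu\,ds$; since $I-A$ is invertible on the plane orthogonal to the axis, this vanishes only if $W$ is parallel to $\overleftrightarrow{O_1O_2}$, which is precisely what the paper's one-sentence justification does not establish. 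In short: your proposal reconstructs the paper's argument in full except for the vector half of (\ref{Van}), which the paper resolves by assertion rather than by either of the remedies you suggest. To finish along the paper's lines you would need to supply the missing fact $\int_{C_1}Y\wedge\nu\,ds\parallel\overleftrightarrow{O_1O_2}$ (or otherwise justify (\ref{Van})); neither of your proposed detours is what the paper actually does.
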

\begin{proof}
Let $\rho\coloneqq O_2-O_1$ and let $Y$ be the position vector with respect to $O_1$. We may write
$\partial\Sigma_0=\Gamma_1\cup\Gamma_2\cup C_1\cup C_2$, where $C_1, C_2=\emptyset$ if $\Sigma_0$ is doubly-connected. Then, by (\ref{AL}), we have
\begin{align}\label{Rt}
2|\Sigma_0|=|\Gamma_1|+|\Gamma_2|.
\end{align}

The divergence theorem and the minimality of $\Sigma_0$ imply that 
\begin{align}\label{Dot}
2|\Sigma_0|=\int_{\Sigma_0}\mbox{div}YdA=\int_{\Gamma_1}Y\cdot\nu_1 ds+\int_{\Gamma_2}Y\cdot\nu_2 ds+\int_{C_1\cup C_2}Y\cdot\nu ds.
\end{align}
Here we used $\nu$'s for outer unit conormals. Moreover, if we compute the torque with respect to $O_1$, then
\begin{align}\label{Wed}
0=\int_{\partial\Sigma_0}Y\wedge\nu ds=\int_{\Gamma_1}Y\wedge\nu_1 ds+\int_{\Gamma_2}Y\wedge\nu_2 ds+\int_{C_1\cup C_2}Y\wedge\nu ds.
\end{align}

On the other hand, by Corollary \ref{struc} and Remark \ref{struc2}, $X(H_{R, 0})$ is given by rotating $\Sigma_0$ and gluing them along $C_i$'s. This shows that the outer unit conormal to $C_1$ corresponds to the inner unit conormal to $C_2$ of the adjacent piece. Therefore we can conclude that
\begin{align}\label{Van}
\int_{C_1\cup C_2}Y\cdot\nu ds=0,\ \int_{C_1\cup C_2}Y\wedge\nu ds=0.
\end{align}

Since the surface and spheres intersect orthogonally, we have $Y=\nu_1$ on $\Gamma_1$ and $Y=\rho+\nu_2$ on $\Gamma_2$. Then (\ref{Rt}), (\ref{Dot}) and (\ref{Van}) give
\begin{align*}
\rho\cdot\int_{\Gamma_2}\nu_2 ds=0,
\end{align*}
and from (\ref{Wed}) and (\ref{Van}) we get
\begin{align*}
\rho\wedge\int_{\Gamma_2}\nu_2 ds=0.
\end{align*}
Thus we obtain $\int_{\Gamma_2}\nu_2 ds=0$ as $\rho\neq 0$. Similarly, $\int_{\Gamma_1}\nu_1 ds=0$ can be proved by using the position vector with respect to $O_2$, instead of $Y$.
\end{proof}  
Therefore we have proved that each Liouville solution satisfying (\ref{AL}) provides a (partially) free boundary fundamental piece $\Sigma_0$ with two possibilities: 
\begin{itemize}
\item $\Sigma_0$ in a ball ($O_1=O_2$)
\item $\Sigma_0$ orthogonal to two spheres ($O_1\neq O_2$) and satisfying (\ref{fluxcond})
\end{itemize}
Each of the cases provides natural research problems regarding the minimal annuli orthogonal to spheres as listed below. We first remark that the second case is not possible if $O_1$ and $O_2$ are far enough, or if the boundary curves on spheres are closed and embedded (in this case, (\ref{fluxcond}) cannot be achieved as each curve lies in a hemisphere). However, when $O_1$ and $O_2$ are sufficiently close to each other and the boundary is immersed, one can try to find an example corresponding to the second case:
\begin{que}
Can we find an example for the second case?
\end{que}

This example, if it exists, may degenerate when $O_1$ and $O_2$ are equal. Moreover, one can ask how many pieces that an immersed free boundary minimal annulus can have. Note that this question is closely related to symmetries of a surface. For example, if we assume that a minimal annulus is symmetric to two orthogonal vertical planes, then it should consist of an even number of fundamental pieces.
\begin{que}
If a free boundary minimal annulus orthogonal to spheres is not a part of the catenoid, can it be divided into more than one congruent piece?
\end{que}

We also recall that a rotationally symmetric free boundary minimal annulus in a ball is known to be the critical catenoid. On the other hand, a rotational symmetry of the Liouville solution implies that the metric has a rotational symmetry. Since the second fundamental form is also rotational in our case, the intrinsic rotational symmetry would lead to the extrinsic rotational symmetry on the surface. Thus, in order to prove the Fraser-Li conjecture or to find an immersed counter-example, we may study: 
\begin{que}
On what conditions the Liouville solution or the fundamental piece have rotational symmetry? 
\end{que}


\section*{Acknowledgements}
The authors would like to express their gratitude to Jaigyoung Choe for helpful comments concerning the application of the main theorem. This work was supported in part by NRF-2018R1A2B6004262.

\end{document}